\documentclass[12pt,reqno]{amsart}
\usepackage{a4wide}
\usepackage{xcolor}
\usepackage{url}
\usepackage{amsmath,amsthm,bm}
\usepackage{amssymb}
\usepackage[utf8]{inputenc}
\usepackage[T1]{fontenc}
\def\F{\mathbb{F}}
\def\Z{\mathbb{Z}}
\def\C{\mathbb{C}}
\def\E{\mathbb{E}}
\def\P{\mathbb{P}}
\newcommand{\nor}[1]{\lVert #1 \rVert}

\def\rk{\text{rk}}
\def\Span{\mathrm{span}}
\def\Mat{\mathrm{Mat}}
\def\codim{\mathrm{codim}}
\newcommand{\abs}[1]{\left\lvert #1 \right\rvert}
\newtheorem{trm}{Theorem}
\newtheorem{prop}[trm]{Proposition}
\newtheorem{conj}[trm]{Conjecture}
\newtheorem{lm}[trm]{Lemma}
\newtheorem{cor}[trm]{Corollary}
\theoremstyle{remark}

\newtheorem{rmk-pl}[rmk]{Remarks}

\newtheorem{ex-sg}[ex]{Example}
\theoremstyle{definition}

\title{A bilinear Bogolyubov theorem}
\author[P.-Y. Bienvenu]{Pierre-Yves Bienvenu}
\address{P.-Y. Bienvenu, Institut Camille-Jordan, Université Lyon 1, 43 boulevard du 11 novembre 1918
69622 Villeurbanne cedex, France}
\email{pierre.bienvenu@math.univ-lyon1.fr}
\title{A bilinear Bogolyubov theorem}
\author[T. H. L\^e]{Th\'ai Ho\`ang L\^e}
\address{T. H. L\^e, Department of Mathematics, The University of Mississippi, University, MS 38677, United States}
\email{leth@olemiss.edu} 
\date{\today}\begin{document}
\begin{abstract}
The purpose of this note is to prove the existence of
a remarkable structure in an
iterated sumset derived from a set $P$ in
a Cartesian square $\F_p^n\times\F_p^n$.
More precisely, we perform horizontal and vertical sums and differences on $P$, that is, operations on the second coordinate
when the first one is fixed, or vice versa.
The structure we find is the zero set of a family of bilinear forms
on a Cartesian product of vector subspaces. 
The codimensions of the subspaces and the number of bilinear forms
involved are bounded by a function $c(\delta)$
of the density $\delta=\abs{P}/p^{2n}$ only.
The proof uses various tools of additive combinatorics, such as the (linear) Bogolyubov theorem,
the density increment method, as well as the Balog-Szemer\'edi-Gowers and Freiman-Ruzsa theorems.
\end{abstract}

\maketitle
\section{Introduction}
Let $\F=\F_p$ be a finite field of fixed prime order and $V$ be a vector space of dimension $n$ over $\F$. In this paper, 
the dimension $n$ is the asymptotic parameter and
all the $O(\cdot)$ and $o(\cdot)$ may depend on $p$ but not $n$.
By the \textit{density} of a subset $A \subset V$ we mean 
$\frac{|A|}{|V|}$.
The classical Bogolyubov theorem states the following:
\begin{trm} [Bogolyubov] \label{trm:linBogo} If $A \subset V$ is a set of density $\alpha >0$, then the sumset
\[
A+A-A-A : = \{a_1 + a_2 -a_3-a_4 \mid (a_1,\ldots,a_4) \in A^4 \}
\]
contains a vector subspace of codimension $c(\alpha)$.
\end{trm}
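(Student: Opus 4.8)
The plan is to run the classical Fourier-analytic argument on $V$. First I would fix an isomorphism $V \cong \F_p^n$ and set, for $f : V \to \C$ and $\xi \in V$,
\[
\hat f(\xi) := \E_{x \in V} f(x)\, e(-\xi \cdot x), \qquad e(t) := e^{2\pi i t/p},
\]
so that Fourier inversion $f(x) = \sum_{\xi} \hat f(\xi) e(\xi \cdot x)$ and Parseval's identity $\sum_{\xi} \abs{\hat f(\xi)}^2 = \E_x \abs{f(x)}^2$ hold. Writing $f = 1_A$ (so $\hat f(0) = \alpha$), the counting function
\[
r(x) := \#\{(a_1,a_2,a_3,a_4) \in A^4 : a_1 + a_2 - a_3 - a_4 = x\}
\]
is a fourfold convolution of $1_A$ and $1_{-A}$, and expanding via inversion gives $r(x) = \abs{V}^3 \sum_{\xi \in V} \abs{\hat f(\xi)}^4 e(\xi \cdot x)$. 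It then suffices to produce a subspace $W \le V$ of codimension $c(\alpha)$ on which $r > 0$, since $r(x) > 0$ forces $x \in A+A-A-A$.

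The second step is to split the Fourier support at a threshold. Fix $\rho \in (0,1)$ to be chosen and let $\Delta := \{\xi \in V : \abs{\hat f(\xi)} \ge \rho\alpha\}$ be the large spectrum; since $0 \in \Delta$ and, by Parseval, $\sum_{\xi} \abs{\hat f(\xi)}^2 = \alpha$, we get $\abs{\Delta} \le (\rho^2\alpha)^{-1}$. Put $W := \{x \in V : \xi \cdot x = 0 \text{ for all } \xi \in \Delta\}$, a subspace with $\codim W \le \abs{\Delta} \le (\rho^2\alpha)^{-1}$ because it is cut out by at most $\abs{\Delta}$ linear forms. For $x \in W$ all characters $e(\xi \cdot x)$ with $\xi \in \Delta$ equal $1$, hence
\[
\abs{V}^{-3} r(x) \;=\; \sum_{\xi \in \Delta} \abs{\hat f(\xi)}^4 \;+\; \sum_{\xi \notin \Delta} \abs{\hat f(\xi)}^4 e(\xi \cdot x),
\]
where the first sum is at least $\abs{\hat f(0)}^4 = \alpha^4$ and the second is bounded in absolute value by $\max_{\xi \notin \Delta} \abs{\hat f(\xi)}^2 \cdot \sum_{\xi} \abs{\hat f(\xi)}^2 \le (\rho\alpha)^2 \cdot \alpha = \rho^2\alpha^3$. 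Therefore $r(x) \ge \abs{V}^3(\alpha^4 - \rho^2\alpha^3)$, which is positive provided $\rho^2 < \alpha$; taking $\rho = \tfrac12\sqrt{\alpha}$ gives $r > 0$ on $W$ and $\codim W \le 4\alpha^{-2}$, so one may take $c(\alpha) = 4\alpha^{-2}$.

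I do not expect a genuine obstacle here: the whole argument rests only on Fourier inversion, Parseval, and the trivial fact that a subspace defined by $k$ linear equations has codimension at most $k$. The only points requiring mild care are bookkeeping of the normalisation constants in the definitions of $\hat f$ and of the convolution (none of which affects the qualitative conclusion), and checking that $0$ lies in the large spectrum so that the main term $\alpha^4$ is genuinely present. I would make no attempt to optimise the constant $4\alpha^{-2}$, since the later parts of the paper use only that $c(\alpha)$ is finite.
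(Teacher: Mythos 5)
Your proof is correct and is precisely the standard Fourier-analytic argument (large-spectrum splitting plus Parseval) that the paper cites for the bound $c(\alpha)=O(\alpha^{-2})$; indeed the paper's own Lemma \ref{lem:wholespace} carries out the same positivity computation for $1_A*1_A*1_{-A}*1_{-A}$. The paper does not reprove this classical theorem, so there is nothing to compare beyond noting that your normalisations, the bound $\abs{\Delta}\le(\rho^2\alpha)^{-1}$, and the choice $\rho=\tfrac12\sqrt{\alpha}$ all check out.
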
The notation $A+A-A-A$ is often abbreviated as $2A-2A$.
Note that Bogolyubov's original argument \cite{bogo} works in $\Z$
instead of
vector spaces, but at least since Green's survey \cite{GreenSurvey}
on finite field models, it has been applied to the vector space setting, where it gives $c(\delta) = O(\delta^{-2})$.  The best bound is due to Sanders \cite[Theorem 11.1]{Sanders}, who showed that $c(\delta) = O(\log^4 \delta^{-1})$. The very short proof of the polynomial bound
as well as a simplified exposition of Sanders' 
breakthrough can be found in the excellent survey \cite{ffsurvey}.
Sanders' result is usually stated and proven for $p=2$ but
holds for any $p$, the implied constant depending on $p$.
We point out that the corresponding statement for $A-A$ is not true both in vector spaces (see \cite[Theorem 9.4]{Green}) and in $\Z$ (a result of K\v{r}\'i\v{z} \cite{kriz}). 

The purpose of this note is to prove a bilinear version of Theorem \ref{trm:linBogo}, that is, for dense subsets of $V \times V$. Before stating our result, we first need some definitions. 
For a subset $P\subset V\times V$, we define vertical and horizontal additive operations
on $P$ as follows:
$$P\stackrel{V}{\pm}P=\{(x,y_1\pm y_2)\mid \left((x,y_1),(x,y_2)\right)\in P^2\}$$ 
and
$$P\stackrel{H}{\pm}P=\{(x_1\pm x_2,y)\mid \left((x_1,y),(x_2,y)\right)\in P^2\}$$ 
where $V$ and $H$ mean vertical and horizontal.
Note that $V$ is also the name of the ambient space, but this
should not create any confusion.
We denote by $\phi_V$
the operation
\begin{equation*}
P\mapsto (P\stackrel{V}{+}P)\stackrel{V}{-}(P\stackrel{V}{+}P)
\end{equation*}
and define the operation $\phi_H$ similarly.
\begin{trm}
\label{mainTrm}
For any $\delta >0$, there exists a constant $c(\delta)>0$
such that the following holds.
Let $P\subset V\times V$ have density $\delta$.
There exist subspaces $W_1\leq V, W_2\leq V$ of codimension
$r_1,r_2$
and a family
$\mathbf{Q}=(Q_1,\ldots,Q_{r_3})$ of bilinear forms on $W_1\times W_2$
such that 
\begin{equation} \label{eq:bilinearSet}
\phi_H \phi_V \phi_H (P) \supset \{(x,y)\in X\times Y\mid Q_1(x,y)=\cdots=
Q_{r_3}(x,y)=0\}
\end{equation}
where
$\max(r_1,r_2, r_3)\leq c(\delta)$.
Moreover $c(\delta)$ can be taken as $O(\exp(\exp(\exp(\log^{O(1)} \delta^{-1}))))$.
\end{trm}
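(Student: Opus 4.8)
The plan is to analyze the three-step iteration $\phi_H\phi_V\phi_H$ by tracking what each application of a $\phi$-operation does to the combinatorial data attached to $P$. The natural object to carry along is, for each fixed first coordinate $x$, the \emph{column} $P_x=\{y:(x,y)\in P\}$, and dually the rows $P^y$. The starting observation is that after one application of $\phi_H$ we may assume that for each $y$ in a large set, the row $(\phi_H P)^y$ already contains a subspace: indeed $\phi_H P$ has $((\phi_H P)^y) \supset 2P^y-2P^y$ for every $y$, so by the linear Bogolyubov theorem (Theorem \ref{trm:linBogo}, with Sanders' bound $c(\delta)=O(\log^4\delta^{-1})$) each row of density $\geq\delta/2$ contains a subspace of codimension $O(\log^4\delta^{-1})$. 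A pigeonholing (Bogolyubov's theorem is applied only for rows that are reasonably dense, and there are at most $p^n$ candidate subspaces of bounded codimension but we can do better) then shows that a positive-density set $Y_0$ of rows all contain \emph{the same} subspace $W_2$ of codimension $r_2=O(\log^4\delta^{-1})$. Symmetrically, applying $\phi_V$ next and using that each column of $\phi_V\phi_H P$ contains $2(\phi_H P)_x - 2(\phi_H P)_x$, another Bogolyubov-plus-pigeonhole step produces a common subspace $W_1$ inside a positive-density set of columns of $\phi_V\phi_H P$. The content of the two inner operations is therefore: \emph{after $\phi_V\phi_H$, the set $P':=\phi_V\phi_H P$ contains $(X_0\times\{0\})\cup\bigcup_{x\in X_0}(\{x\}\times (v(x)+W_2))$-type structure}, i.e.\ for every $x$ in a dense set $X_0\subset W_1$ the column $P'_x$ contains a coset $v(x)+W_2$ of a fixed subspace.

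The heart of the argument is then the final $\phi_H$. Consider two pairs $x_1,x_2\in X_0$: subtracting columns horizontally, $\phi_H P'$ contains $(x_1-x_2, v(x_1)-v(x_2)+W_2)$ whenever both columns contain their respective cosets — and after the extra $+$ and $-$ that define $\phi_H$ we get, for $x=\sum \pm x_i$ with $x_i\in X_0$ (four of them), the coset $\bigl(\sum\pm v(x_i)\bigr)+W_2$ sitting in column $x$ of $\phi_H P'$. Since $X_0$ has positive density in $W_1$, the set of achievable $x=\sum\pm x_i$ contains, by one more application of the linear Bogolyubov theorem, a subspace $W_1'\leq W_1$ of bounded codimension. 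So it remains to understand the map $x\mapsto v(x) \bmod W_2$ on this structured set of $x$'s. The key point is that $v$ is \emph{approximately affine-linear}: the quantity $v(x_1)+v(x_2)-v(x_1+x_2)\bmod W_2$ is forced, because the relevant column of $\phi_H P'$ already contains two of the three cosets and their combination. Making this precise is exactly where the additive-combinatorial machinery enters — one sets up a ``popular sums'' / energy argument on the graph $\{(x, v(x)): x\in X_0\}\subset W_1\times (V/W_2)$, applies Balog--Szemerédi--Gowers to pass to a subset with small doubling, then Freiman--Ruzsa to place that subset inside a coset of a subspace of bounded size; this shows that after passing to a subspace $W_1'$ of bounded codimension and modifying $v$ by a genuine linear map, the residual deviation $b(x):=v(x)\bmod W_2$ takes values in a bounded-dimension subspace $U\leq V/W_2$ and, crucially, is \emph{bilinear in disguise}: $b$ extends to a linear map $W_1'\to V/W_2$ after one more subspace restriction, so that the zero set of the $r_3=\dim U = O(\cdot)$ coordinate forms $Q_1,\dots,Q_{r_3}$ of $b$ (viewed as bilinear forms on $W_1'\times W_2$ via the pairing with a complement of $W_2$) is precisely the promised structured set contained in $\phi_H\phi_V\phi_H P$. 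Renaming $W_1'$ as $W_1$ and noting that each invocation of BSG/Freiman--Ruzsa costs one exponential while Bogolyubov costs only polylogarithmic factors, a bookkeeping of the three layers (the outer $\phi_H$ contributing the BSG+Freiman--Ruzsa tower) yields the stated $c(\delta)=O(\exp(\exp(\exp(\log^{O(1)}\delta^{-1}))))$.

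The main obstacle, I expect, is \emph{not} any single step but the coherence of the pigeonholing across the three operations: after $\phi_H$ one has a common $W_2$ on a dense set of rows, but to run $\phi_V$ one needs columns that are dense \emph{and} whose intersection with the good rows is still dense, and then after $\phi_V$ the ``coset function'' $v(x)$ is only defined on an $x$-set which must simultaneously (i) be dense in $W_1$, (ii) be Bogolyubov-friendly so that $\phi_H$ on the first coordinate recovers a subspace, and (iii) have the graph $\{(x,v(x))\}$ of small enough additive energy to feed BSG. Arranging one set of parameters that survives all three demands — and doing so without the codimension bounds blowing up faster than a triple exponential — is the delicate part; the ``soft'' structural claims (Bogolyubov on rows/columns, BSG+Freiman--Ruzsa to linearize $v$) are each standard, but their interleaving with the correct order of quantifiers is what makes the theorem nontrivial.
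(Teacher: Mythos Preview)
Your plan has a genuine gap at the very first step, and it is not a bookkeeping issue but a structural one. After $\phi_H$ you correctly obtain, for each $y$ in a dense set $A$, a subspace $V_y\leq V$ of codimension $r=O(\log^4\delta^{-1})$ contained in the row. You then assert that a pigeonhole gives a \emph{common} subspace $W_2$ of bounded codimension contained in $V_y$ for a positive-density set of $y$'s. This fails already for the prototype bilinear set $P=\{(x,y):x\cdot y=0\}$: here each row is $V_y=y^\perp$, and a fixed subspace $W_2$ of codimension $c$ satisfies $W_2\subset y^\perp$ only when $y\in W_2^\perp$, a subspace of dimension $c$. So the set of ``good'' $y$ has density $p^{c-n}$, which is $o(1)$ for any $c$ bounded independently of $n$. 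The parenthetical ``we can do better'' is exactly the step that needs an idea, and there is no cheap one: the number of codimension-$r$ subspaces grows like $p^{r(n-r)}$, and the $V_y$ can genuinely vary bilinearly with $y$. Once this step fails, the coset map $v(x)$ you later linearise is never well-defined, so the rest of the outline does not get off the ground.

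What the paper does instead is precisely to avoid this pigeonhole. It keeps the family $\{V_y\}_{y\in A}$ varying and records it via $V_y^\perp=\Span(\xi_1(y),\ldots,\xi_s(y))+U_y$, where the $\xi_j$ are affine maps $W\to V^*$ (the ``linearised'' part) and $U_y$ is an unstructured residual of dimension $\leq r-s$. An iteration scheme (Proposition~\ref{codimensionReduction}) then shows that at each stage one can either (i) terminate with the bilinear structure after $\phi_V$, (ii) drop to a subspace $V'\leq V$ of bounded codimension and decrease $r$, or (iii) peel off one more affine map $\xi_{s+1}$ and decrease $\dim U_y$. Option (iii) is where BSG plus Freiman--Ruzsa enters: a density of additive quadruples $(y_1,\ldots,y_4)$ with $\codim\bigcap V_{y_i}<4r-s$ forces a linear relation among the $\xi_j(y_i)$, which after a partition argument produces many additive quadruples in a graph $\{(y,\xi(y))\}$, and Lemma~\ref{BSGFR} linearises $\xi$. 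The triple exponential arises because one runs up to $2r$ such steps, each costing a quasipolynomial loss, and after the iteration one needs a further regularity step (Lemma~\ref{ExtendedRegularity}) to ensure that $A$ is equidistributed in all relevant subspaces so that $2A_x-2A_x$ fills $\overrightarrow{B_x}$. Your instinct that BSG and Freiman--Ruzsa are the engine is right, but they are applied to linearise the map $y\mapsto V_y^\perp$ coordinate by coordinate, not to a coset map coming from a common subspace.
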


Our proof actually gives $\max(r_1, r_3) = O(\log^{O(1)}\delta^{-1})$. We point out that Gowers and Milićević \cite{GM} independently proved 
a result very similar to Theorem \ref{mainTrm}. However, their method and bounds are different from ours. They proved $\max(r_1,r_2, r_3)
=O ( \exp(\exp(\log^{O(1)}\delta^{-1})))$.

In view of our bounds for $r_1$ and $r_3$ and the fact that the roles of $r_1$ and $r_2$ are symmetric, it is quite reasonable to conjecture the following.
\begin{conj}[polylogarithmic bilinear Bogolyubov] \label{conj}
In Theorem \ref{mainTrm}, one can take $c(\delta)= O (\log^{O(1)}\delta^{-1})$.
\end{conj}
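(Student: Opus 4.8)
\textbf{Towards Conjecture~\ref{conj}.} The natural route is to re-run the proof of Theorem~\ref{mainTrm} and replace, wherever it is invoked, the Freiman-Ruzsa theorem by its polynomial version. The key observation is that the proof of Theorem~\ref{mainTrm} already gives $r_1,r_3=O(\log^{O(1)}\delta^{-1})$: the excess in $c(\delta)$ lives entirely in $r_2$, and the tower of three exponentials there can be traced to a bounded number of applications of the Balog-Szemer\'edi-Gowers and Freiman-Ruzsa theorems, nested one inside the other. A single such application turns a set with doubling (or tripling, or normalised additive energy) $K$ into a subspace inside which the set has relative density $\exp(-K^{O(1)})$; because the parameter $K$ fed to the $j$-th application is itself a tower of $j-1$ exponentials of $\log^{O(1)}\delta^{-1}$, one lands at a triple exponential. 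The polynomial Freiman-Ruzsa theorem, recently established by Gowers, Green, Manners and Tao and valid over $\F_p$ with $p$-dependent constants (just as Sanders' bound in Theorem~\ref{trm:linBogo} is), replaces the output density $\exp(-K^{O(1)})$ by $K^{-O(1)}$. Injecting this into the same scheme, a set of density $\delta$ is processed by $O(1)$ rounds, each round sending the current density $\delta'$ to $\mathrm{poly}(\delta')$ and contributing, via Sanders' Bogolyubov, a codimension or a number of bilinear forms bounded by $\log^{O(1)}((\delta')^{-1})$; a polynomial of a polynomial is a polynomial, and $\log^{O(1)}$ of it is still $\log^{O(1)}\delta^{-1}$, giving $\max(r_1,r_2,r_3)=O(\log^{O(1)}\delta^{-1})$.

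Concretely the plan is: (i) isolate the three horizontal/vertical Bogolyubov steps, which by Sanders already cost only $\log^{O(1)}$ and produce $r_1$ and $r_3$; (ii) locate the step where, from the family $x\mapsto W_x$ of bounded-codimension subspaces obtained after the first two operations $\phi_V\phi_H$, one extracts additive structure on the parameter $x$ --- this is where BSG${}+{}$FR enters and where $W_2$ (hence $r_2$) is created --- and rewrite it with BSG${}+{}$PFR; (iii) likewise replace any further FR step used, when $\phi_H$ is applied the last time, to upgrade a coset progression or Bohr-type set produced by a density increment into an honest subspace; (iv) re-run the passage from the resulting linear-algebraic family over a subspace to the zero set of $r_3$ bilinear forms, carrying the now-polynomial parameters through; (v) collate the three codimension/rank budgets.

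The main obstacle is the bookkeeping in steps (ii) and (iv). After one round the working density is already as small as $\exp(-\log^{O(1)}\delta^{-1})$, and each subsequent intersection of $t$ subspaces of codimension $d$ --- unavoidable when pushing $\bigcap_i W_{x_i}$ through a Bogolyubov representation $x=x_1+x_2-x_3-x_4$ --- costs a further factor $p^{-td}$, i.e.\ again $\exp(-\log^{O(1)}\delta^{-1})$; one must check that every such factor is absorbed by a subsequent $\log^{O(1)}$ coming from Sanders rather than by another $\exp$ coming from Freiman-Ruzsa, and that the density-increment loops terminate in $\log^{O(1)}\delta^{-1}$ rather than $\mathrm{poly}(\delta^{-1})$ steps, which is plausible only with Sanders-type (not bare $L^2$) increments. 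If instead one starts from the Gowers-Mili\'cevi\'c argument, which already reaches a double exponential and presumably invokes Freiman-Ruzsa only twice, the same substitution should be cleaner, reducing the task to verifying that those two calls are the sole super-polylogarithmic losses. Since PFR is the only non-elementary new input, a successful audit of this kind would settle Conjecture~\ref{conj} unconditionally.
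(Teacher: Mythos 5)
This statement is a conjecture: the paper offers no proof of it, and what you have written is not a proof either but a plan whose central premise misreads where the losses in the paper's argument actually occur. The triple exponential in $r_2$ does \emph{not} come from a bounded number of nested Balog--Szemer\'edi--Gowers/Freiman--Ruzsa applications. It comes from the fact that the iteration scheme (Proposition \ref{codimensionReduction}) is run up to $2r$ times with $r=O(\log^{O(1)}\delta^{-1})$, and each round multiplies the quasipolynomial loss: the density after $i$ rounds is $\exp(-\log^{C^i}\alpha^{-1})$, and with $i$ as large as $2r$ this is $\exp(-\exp(\exp(\log^{O(1)}\delta^{-1})))$. Replacing Freiman--Ruzsa by its polynomial version, so that each round sends $\alpha\mapsto\alpha^{C}$, still yields $\alpha^{C^{2r}}=\exp(-\exp(O(\log^{O(1)}\delta^{-1}))\cdot\log\alpha^{-1})$ after the iteration terminates --- a double exponential, not a polynomial, because ``a polynomial of a polynomial is a polynomial'' fails when the composition depth itself grows with $\log\delta^{-1}$. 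Your claim of ``$O(1)$ rounds'' is the gap.

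There is a second, independent obstruction you flag only in passing but do not resolve: the pseudorandomization step (Corollary \ref{cor:regularity}, invoked before every application of Proposition \ref{codimensionReduction}) costs codimension $O((\alpha\epsilon)^{-1/2}\log\alpha^{-1})=O(p^{r/2}\alpha^{-2/3})$ in $W$, which is already polynomial in $\delta^{-1}$ at the very first stage. So even one round of the paper's scheme, with PFR in hand, gives $r_2=\mathrm{poly}(\delta^{-1})$ rather than $\log^{O(1)}\delta^{-1}$; the $L^2$ density-increment underlying Lemma \ref{ExtendedRegularity} would have to be replaced wholesale, not merely audited. A correct resolution of the conjecture therefore needs a structurally different iteration (one whose depth and per-step losses are both controlled polylogarithmically), not a substitution of PFR into the existing one. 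Your proposal identifies a genuinely relevant ingredient but does not constitute a proof, and its quantitative accounting is incorrect.
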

If $P$ is a Cartesian product $A\times B$ for some subsets $A,B\subset V$, then using Theorem \ref{trm:linBogo} once on each coordinate, we obtain a product $A'\times B'$ of subspaces of codimension $O(\log^4\delta^{-1})$.
Also it is easy to see that $c(\delta)\gg \log\delta^{-1}$ by considering a set such as the right-hand side of equation \eqref{eq:bilinearSet}. Conjecture \ref{conj} says that, like in the linear case, this lower bound on $\delta$ should not be too far off the truth. The conjecture remains equally interesting and useful for the application we have in mind if $O(1)$ operations $\phi_V$ or $\phi_H$ are required instead of 3.


\noindent \textbf{A quick application.}
Our application concerns matrices of low rank.
Suppose a two-parameter, bilinearly varying family of matrices is often 
of rank at most $\epsilon$. Then it must be of rank $O(\epsilon)$ on 
a whole bilinear set. 
We now state this application precisely.
\begin{cor}
Suppose that we have a bilinear map
$\psi : \F^n\times\F^n\rightarrow \Mat_m(\F)$.
Suppose that the set
$$
P_\epsilon=\{(f,g)\in\F^n\times\F^n\mid \text{\emph{rank}}(\psi(f,g))\leq\epsilon\}
$$
has density $\delta >0$.
Then the set 
$$
P_{64\epsilon}=\{(f,g)\in\F^n\times\F^n\mid \text{\emph{rank}}(\psi(f,g))\leq 64\epsilon\}
$$
contains a set of the form \eqref{eq:bilinearSet}
where the codimensions and the cardinality of the family of bilinear forms are at most $c(\delta)$.
\end{cor}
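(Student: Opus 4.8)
The plan is to apply Theorem~\ref{mainTrm} directly to the set $P=P_\epsilon$, which has density $\delta$ by hypothesis, and to establish separately the inclusion $\phi_H\phi_V\phi_H(P_\epsilon)\subseteq P_{64\epsilon}$; combining the two statements yields the corollary at once.

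First I would isolate the elementary rank estimate that does all the work. Since $\psi$ is bilinear, for a fixed first coordinate $x$ one has $\psi(x,y_1\pm y_2)=\psi(x,y_1)\pm\psi(x,y_2)$, and the rank of a sum of two matrices is at most the sum of their ranks; the symmetric statement holds with the roles of the coordinates reversed. Hence, whenever $A\subseteq\F^n\times\F^n$ is a subset contained in $P_s$ for some $s>0$, we get $A\stackrel{V}{\pm}A\subseteq P_{2s}$ and $A\stackrel{H}{\pm}A\subseteq P_{2s}$, because a point of $A\stackrel{V}{\pm}A$ (resp.\ of $A\stackrel{H}{\pm}A$) has the form $(x,y_1\pm y_2)$ (resp.\ $(x_1\pm x_2,y)$) with both constituent points lying in $A\subseteq P_s$.

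Applying this twice, for any $s>0$ we obtain $\phi_V(P_s)=(P_s\stackrel{V}{+}P_s)\stackrel{V}{-}(P_s\stackrel{V}{+}P_s)\subseteq P_{2s}\stackrel{V}{-}P_{2s}\subseteq P_{4s}$, and likewise $\phi_H(P_s)\subseteq P_{4s}$. Since $\phi_V$ and $\phi_H$ are visibly monotone with respect to set inclusion, iterating gives
\[
\phi_H\phi_V\phi_H(P_\epsilon)\subseteq\phi_H\phi_V(P_{4\epsilon})\subseteq\phi_H(P_{16\epsilon})\subseteq P_{64\epsilon}.
\]
Now Theorem~\ref{mainTrm} applied to $P_\epsilon$ furnishes subspaces $W_1,W_2$ of codimensions $r_1,r_2$ and a family $Q_1,\dots,Q_{r_3}$ of bilinear forms on $W_1\times W_2$, with $\max(r_1,r_2,r_3)\le c(\delta)$, such that $\phi_H\phi_V\phi_H(P_\epsilon)$ contains the zero set $\{(x,y)\in X\times Y\mid Q_1(x,y)=\cdots=Q_{r_3}(x,y)=0\}$. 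Chaining this with the displayed inclusion shows that $P_{64\epsilon}$ contains a set of the form \eqref{eq:bilinearSet} with the required bounds on the codimensions and on the size of the family.

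I do not expect a genuine obstacle: the argument is a one-line rank inequality fed into Theorem~\ref{mainTrm}. The only point requiring a little care is the bookkeeping of constants --- each of $\phi_V$ and $\phi_H$ is built from two $\pm$-operations, each of which at worst doubles the rank, so each of the three outer operations costs a factor $4$ and the composition costs $4^3=64$; this is precisely why the statement features $64\epsilon$ rather than a smaller multiple of $\epsilon$. (If one were willing to replace $\phi_H\phi_V\phi_H$ by a different fixed composition of $O(1)$ operations, as allowed by the remark following Conjecture~\ref{conj}, the constant $64$ would change accordingly.)
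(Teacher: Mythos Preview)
Your argument is correct and matches the paper's own proof: apply Theorem~\ref{mainTrm} to $P_\epsilon$ and use the bilinearity of $\psi$ together with the subadditivity of rank to see that $\phi_H\phi_V\phi_H(P_\epsilon)\subseteq P_{64\epsilon}$. You have simply spelled out the constant-tracking that the paper leaves implicit.
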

The authors exploit this corollary, with the conjectured bound on $c(\delta)$ in Theorem \ref{mainTrm}, in a companion paper \cite{bl}.
\begin{proof}
We apply Theorem \ref{mainTrm} to $P$. Note that the set
$P'$ it produces is included in $P_{64\epsilon}$
by the bilinearity of $\psi$ and the fact that $\text{rank} (A+B)\leq \text{rank } A+\text{rank }B$ for any two matrices $A$ and $B$.
\end{proof}
The organization of the paper is as follows. In Section \ref{sec:prelim} we recall some basic facts and preliminaries. The heavy lifting part of our argument is an iteration scheme, Proposition \ref{codimensionReduction}. In Section \ref{sec:deduce} we show how this Proposition implies Theorem \ref{mainTrm}. Section \ref{sec:proof} is devoted to proving Proposition \ref{codimensionReduction}.
 
\section{Preliminaries} \label{sec:prelim}
The symbol $\E$ will be at some point used in its usual 
probabilistic sense, but it will frequently denote an average,
thus $\E_{x\in X} =\frac{1}{\abs{X}}\sum_{x\in X}$.
Similarly, $\P_{x\in X}(x\in Y)=\frac{\abs{Y}}{\abs{X}}$
for sets $Y\subset X$. 

We now briefly recall some basic facts about
the Fourier transform and convolutions.
Let $V$ be a finite $\F$-vector space; in fact, all spaces considered will be finite in this paper, so we may not always specify this hypothesis. Then we denote by $\widehat{V}$
its \emph{dual}, the set of characters on $V$. 
A character $\chi\in\widehat{V}$ takes values in the $p$-th roots of unity, that is, 
$1,\omega,\ldots,\omega^{p-1}$ where $\omega=\exp(2i\pi x/p)$.
The trivial character is $\chi = 1$.
Let $f : V\rightarrow\C$ be a function.
Then the Fourier transform $\hat{f}$ is
defined on $\widehat{V}$ by
$$
\hat{f}(\chi)=\E_{x\in V}f(x)\chi(x).
$$
In particular, if $A\subset V$ has density $\alpha$ 
and indicator function $1_A$, 
we have $\widehat{1_A}(1)=\alpha$.
Besides, we have $\widehat{1_{-A}}
=\overline{\widehat{1_A}}$.

If $W$ is an affine subspace of $V$ of direction $\overrightarrow{W}$, thus $W=a+\overrightarrow{W}$ for some $a\in V$, and $f : W\rightarrow\C$ is a function,
we define the function $\tilde{f}$
on the vector space $\overrightarrow{W}$ by
$\tilde{f}(v)=f(a+v)$.
We then define the Fourier transform of $f$
relative to $W$ as the Fourier transform of $\tilde{f}$ on
$\overrightarrow{W}$.
We will abuse notation and denote by
$\widehat{W}$  the dual of $\overrightarrow{W}$.
Thus the notion of Fourier transform depends on
the ambient (potentially affine) space
one is considering, but when no ambiguity is possible,
the space considered may not be made explicit.

Besides, if $f,g : V\rightarrow\C$ are two functions,
we define their \emph{convolution} $f* g: V\rightarrow\C $ by
$$
f* g (x)=\E_{y\in V}f(y)g(x-y).
$$
We define the $U^2$ norm by 
$$
\nor{f}_{U^2(V)}^4=\E_{x\in V} \abs{f* f(x)}^2.
$$
A quadruple $(x_1,x_2,x_3,x_4)\in V^4$ satisfying
$x_1+x_2=x_3+x_4$ is called an \emph{additive quadruple}.

Observe that if $f=1_A$ is the indicator function of the subset
$A\subset V$, then 
$$
\nor{1_A}_{U^2(V)}^4=\frac{\abs{\{(x_1,x_2,x_3,x_4)\in A^4\mid x_1+x_2=x_3+x_4\}}}{\abs{V}^3}
$$
and we refer to this quantity as the \emph{density of additive quadruples} in $A$.
Again if $W$ is an affine subspace of $V$ and $f : W\rightarrow\C$ is a function, we
will write $
\nor{f}_{U^2(W)}=\nor{\tilde{f}}_{U^2(\overrightarrow{W})}$.
Remark that the connection with the additive quadruples
of $A\subset W$ is preserved,
because additive quadruples are invariant by translation.
When it is obvious from the context which space one is considering,
one will simply write $\nor{f}_{U^2}$.

We recall without proof a few basic properties of the Fourier transform.
\begin{enumerate}
\item 
Parseval's identity is the statement that
$$
\E_{x\in V}\abs{f(x)}^2=\sum_{\chi\in \widehat{V}}\abs{\hat{f}(\chi)}^2.
$$
In particular, for a  subset
$A\subset V$ of density $\alpha$, we have $\sum_{\chi\in \widehat{V}}\abs{\widehat{1_A}(\chi)}^2=\alpha$.
\item 
The Fourier transform of a convolution is the product of the
Fourier transforms,
that is
$$\widehat{f* g}=\hat{f}\hat{g}.$$
\item \label{p3} Combining the previous two points, we see that
the $U^2$ norm of a function is the $L_4$ norm
of its Fourier transform, that is
$$
\nor{f}_{U^2(V)}=\nor{\hat{f}}_4.
$$
In particular if $f=1_A$ for a subset $A$ of density $\alpha$, Parseval's identity implies that 
\begin{equation} \label{eq:u2}
\alpha^4 \leq \nor{\widehat{1_A}}_4^4 = \alpha^4 + \sum_{\chi\in \widehat{V}, \, \chi \neq 1} \abs{\widehat{1_A}(\chi)}^4 \leq \alpha^4 + \alpha \max_{\chi\in \widehat{V}, \, \chi \neq 1} \abs{\widehat{1_A}(\chi)}^2.
\end{equation}
When a set $A\subset W$ of density $\alpha$ has about as few additive quadruples as it can,
that is, $\alpha^4 \leq \nor{1_A}_{U^2(W)}^4\leq \alpha^4(1+\epsilon)$, we will call it $\epsilon$-\textit{pseudorandom}. In particular, $A$ is 
$\epsilon$-pseudorandom in $W$ if $\max_{\chi\in \widehat{W}, \, \chi \neq 1} \abs{\widehat{1_A}(\chi)} \leq \alpha^{3/2} \epsilon^{1/2}$.

\item The Fourier inversion formula is the statement that
\begin{equation}
\label{inversion}
f=\sum_{\chi\in\widehat{V}}\hat{f}(\chi)\overline{\chi}.
\end{equation}
\end{enumerate}

Our first lemma says that if $A$ is sufficiently pseudorandom in terms of its density then $2A-2A$ is the whole space.

\begin{lm} \label{lem:wholespace}
Let $W$ be an affine subspace of $V$ and $A\subset W$ have
density $\alpha$. If $\|1_A - \alpha \|_{U^2(W)} < \alpha$, or equivalently,
\begin{equation}
\sum_{\chi\neq 1} \abs{\widehat{1_A}(\chi)}^4 < \alpha^4, 
\end{equation}
then $2A-2A = \overrightarrow{W}$. Consequently, if $\max_{\chi \in \widehat{W}, \, \chi \neq 1} \abs{ \widehat{1_A}(\chi)} < \alpha^{3/2}$ then $2A-2A = \overrightarrow{W}$.
\end{lm}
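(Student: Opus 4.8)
The plan is to run the standard Fourier-analytic argument showing that, under the pseudorandomness hypothesis, the number of representations of every $x\in\overrightarrow W$ as an element of $2A-2A$ is strictly positive.

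First I would reduce to the case where $W=\overrightarrow W$ is a genuine vector space: since $2A-2A\subseteq\overrightarrow W$ always, and since both $2A-2A$ and the quantities $\widehat{1_A}(\chi)$ taken relative to $W$ are unchanged if we translate $A$ inside $W$, we may replace $A$ by a translate lying in $\overrightarrow W$ and work with the honest Fourier transform on $\overrightarrow W$. I would also record the equivalence of the two formulations of the hypothesis: by property (\ref{p3}) and Parseval, $\nor{1_A-\alpha}_{U^2(W)}^4=\nor{\widehat{1_A-\alpha}}_4^4=\sum_{\chi\neq1}\abs{\widehat{1_A}(\chi)}^4$, because $\widehat{1_A-\alpha}$ vanishes at the trivial character and coincides with $\widehat{1_A}$ at every nontrivial character.

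Next, for $x\in\overrightarrow W$ let $N(x)$ be the number of quadruples $(a_1,a_2,a_3,a_4)\in A^4$ with $a_1+a_2-a_3-a_4=x$; then $N(x)=\abs{W}^3\,(1_A*1_A*1_{-A}*1_{-A})(x)$. Using $\widehat{f*g}=\hat f\hat g$ together with $\widehat{1_{-A}}=\overline{\widehat{1_A}}$ gives $\widehat{1_A*1_A*1_{-A}*1_{-A}}(\chi)=\abs{\widehat{1_A}(\chi)}^4$, so the Fourier inversion formula (\ref{inversion}) yields $(1_A*1_A*1_{-A}*1_{-A})(x)=\sum_{\chi}\abs{\widehat{1_A}(\chi)}^4\overline{\chi(x)}$. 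Isolating the trivial character contributes exactly $\alpha^4$, while the triangle inequality bounds the remaining sum in absolute value by $\sum_{\chi\neq1}\abs{\widehat{1_A}(\chi)}^4$, which is $<\alpha^4$ by hypothesis. Hence $(1_A*1_A*1_{-A}*1_{-A})(x)>0$, so $N(x)>0$ and $x\in2A-2A$; as $x$ was arbitrary, $2A-2A=\overrightarrow W$.

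Finally, for the ``consequently'' clause I would deduce the $\ell^4$ hypothesis from the $\ell^\infty$ one: if $\sum_{\chi\neq1}\abs{\widehat{1_A}(\chi)}^2=0$ then $1_A$ is constant, so $A=W$ and the conclusion is trivial; otherwise $\sum_{\chi\neq1}\abs{\widehat{1_A}(\chi)}^4\le\bigl(\max_{\chi\neq1}\abs{\widehat{1_A}(\chi)}^2\bigr)\sum_{\chi\neq1}\abs{\widehat{1_A}(\chi)}^2<\alpha^3\cdot\alpha=\alpha^4$ by Parseval, so the first part applies. I do not expect a genuine obstacle here: the argument is entirely routine once one observes that positivity of the representation function at every point follows from the trivial Fourier coefficient dominating the sum of the moduli of all the others; the only points requiring a little care are the passage to the vector-space setting with the relative Fourier transform and the degenerate case $A=W$ in the last statement.
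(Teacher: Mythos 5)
Your proof is correct and follows essentially the same route as the paper: Fourier inversion applied to $1_A*1_A*1_{-A}*1_{-A}$, isolating the trivial character's contribution $\alpha^4$ and bounding the nontrivial ones by the hypothesis to get strict positivity at every point of $\overrightarrow{W}$. The extra care you take with the affine translation and the degenerate case is harmless (and in the degenerate case the first part applies directly anyway, since the $\ell^4$ sum is then $0<\alpha^4$).
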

\begin{proof}
For any $x \in \overrightarrow{W}$, by the Fourier inversion formula \eqref{inversion}, we have
\begin{equation*}
1_A* 1_A* 1_{-A}* 1_{-A} (x) = 
\sum_{\chi\in \widehat{W}} \abs{\widehat{1_A}(\chi)}^4\overline{\chi(x)}
\geq \alpha^4-\sum_{\chi\neq 1} \abs{\widehat{1_A}(\chi)}^4 >0.
\end{equation*}
This implies that $x \in 2A - 2A$.
\end{proof}

We also need the following standard fact which relates the lack of pseudorandomness to density increment.

\begin{lm}[{\cite[Lemma 3.4]{GreenSurvey}}] \label{lem:densityincrement}
Let $W$ be an affine subspace of $V$ and $A\subset W$ have
density $\alpha$. Suppose there exists $\chi \in \widehat{W}, \chi\neq 1$ such that $\abs{\widehat{1_A}(\chi)}\geq \beta$. Then there exists an affine subspace $H\leq W$
of codimension 1 such that the density of $A\cap H$ on $H$ is at least $\alpha + \beta/2$. 
\end{lm}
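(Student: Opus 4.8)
This is a completely standard density-increment argument, so the plan is short. Suppose $\chi \in \widehat{W}$, $\chi \neq 1$, satisfies $\abs{\widehat{1_A}(\chi)} \geq \beta$. Since $\chi$ takes values in the $p$-th roots of unity, its level sets $H_j = \{x \in \overrightarrow{W} : \chi(x) = \omega^j\}$ for $j = 0, 1, \dots, p-1$ are the cosets of the codimension-$1$ subspace $\ker\chi$; translating back to $W$ (an affine space), we get a partition of $W$ into $p$ affine hyperplanes $H_0, \dots, H_{p-1}$. Writing $\alpha_j$ for the density of $A \cap H_j$ inside $H_j$, we have $\E_j \alpha_j = \alpha$, and moreover
\[
\widehat{1_A}(\chi) = \E_{x \in W} 1_A(x)\chi(x) = \frac{1}{p}\sum_{j=0}^{p-1} \alpha_j \omega^j.
\]
So $\beta \le \abs{\widehat{1_A}(\chi)} = \abs{\frac1p \sum_j \alpha_j\omega^j} = \abs{\frac1p\sum_j(\alpha_j - \alpha)\omega^j} \le \frac1p\sum_j \abs{\alpha_j - \alpha}$, using $\sum_j \omega^j = 0$ to insert the constant $\alpha$.

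Now I would split the sum $\sum_j (\alpha_j - \alpha)$ into its positive and negative parts. Since $\sum_j (\alpha_j - \alpha) = 0$, the sum of the positive parts equals half the sum of the absolute values: $\sum_{j : \alpha_j > \alpha} (\alpha_j - \alpha) = \frac12 \sum_j \abs{\alpha_j - \alpha} \ge \frac{p\beta}{2}$. Hence there exists some index $j$ with $\alpha_j - \alpha \ge \frac{p\beta}{2p} = \frac{\beta}{2}$ — in fact, averaging the positive excesses over all $p$ indices already gives a $j$ with $\alpha_j - \alpha \geq \beta/2$. Taking $H = H_j$, which is an affine subspace of $W$ of codimension $1$, the density of $A \cap H$ on $H$ is $\alpha_j \ge \alpha + \beta/2$, as required.

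There is essentially no obstacle here; the only mild subtlety is bookkeeping the passage between the (possibly affine) space $W$ and its direction $\overrightarrow{W}$ when invoking the definition of the Fourier transform relative to $W$, and remembering that $\chi$ is genuinely $p$-valued so that the level-set decomposition has exactly $p$ classes rather than behaving like the real/complex case. Everything else is the triangle inequality and the identity $\sum_{j=0}^{p-1}\omega^j = 0$.
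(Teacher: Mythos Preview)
Your proof is correct and is the standard density-increment argument. The paper itself does not give a proof of this lemma; it simply cites \cite[Lemma~3.4]{GreenSurvey}, whose proof is precisely the argument you reproduce here. One tiny stylistic point: when you write $\widehat{1_A}(\chi)=\E_{x\in W}1_A(x)\chi(x)$ you are implicitly identifying $x\in W$ with $x-a\in\overrightarrow{W}$ in the argument of $\chi$, but you already flag this bookkeeping issue yourself, so there is nothing to fix.
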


Our next tool is a regularity lemma.  

\begin{lm}
\label{ExtendedRegularity}
Let $W$ be an affine subspace of $V$ and $A\subset W$ have
density $\alpha$. Let $\epsilon >0$.
For any $t$, there exists an affine subspace $H\leq W$ of codimension 
$O(t \epsilon^{-1}\log\alpha^{-1})$ such 
that $\abs{A'}= \alpha'\abs{H}$ (where $A'=A\cap H$) with $\alpha'\geq \alpha$
and for any affine subspace $F$ of codimension at most $t$ of $H$, $\frac{\abs{A\cap F}}{\abs{F}}\leq \alpha(1+\epsilon)$.
Consequently, for any affine subspace $F$ of codimension at most 
$t$ of $H$, we also have 
$\frac{\abs{A\cap F}}{\abs{F}} \geq \alpha (1-p^t \epsilon)$.
\end{lm}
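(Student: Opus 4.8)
The plan is to produce $H$ by a density-increment iteration. I would build a nested chain of affine subspaces $W=W_0\supseteq W_1\supseteq\cdots$ contained in $W$ as follows: given $W_i$, set $\alpha_i=\abs{A\cap W_i}/\abs{W_i}$; if some affine subspace $F\leq W_i$ of codimension at most $t$ inside $W_i$ satisfies $\abs{A\cap F}/\abs{F}>(1+\epsilon)\alpha_i$, pick one such $F$ and put $W_{i+1}=F$; otherwise stop and set $H=W_i$. At each step $\alpha_{i+1}>(1+\epsilon)\alpha_i\geq\alpha_i$, so the densities $\alpha_i$ are strictly increasing; in particular the density $\alpha'=\abs{A\cap H}/\abs{H}$ on the terminal subspace satisfies $\alpha'\geq\alpha_0=\alpha$, which is the first assertion.

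I would then bound the length of the iteration. Since $\alpha_i\geq(1+\epsilon)^i\alpha$ and $\alpha_i\leq 1$, we have $(1+\epsilon)^i\leq\alpha^{-1}$, hence $i\leq\log\alpha^{-1}/\log(1+\epsilon)=O(\epsilon^{-1}\log\alpha^{-1})$, since $1/\log(1+\epsilon)=O(\epsilon^{-1})$ for $\epsilon\leq 1$ (which is the regime of interest). As $W_{i+1}$ has codimension at most $t$ inside $W_i$, each step enlarges the codimension of $W_i$ over $W$ by at most $t$, so $\codim_W H=O(t\epsilon^{-1}\log\alpha^{-1})$. By the stopping rule, every affine subspace $F\leq H$ of codimension at most $t$ satisfies $\abs{A\cap F}/\abs{F}\leq(1+\epsilon)\alpha'$, where $\alpha'\geq\alpha$ as above.

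Finally I would deduce the lower bound. Fix an affine subspace $F\leq H$ of codimension $c\leq t$ in $H$, and partition $H$ into the $p^c$ affine subspaces $F=F_1,\ldots,F_{p^c}$ parallel to $F$ (i.e.\ with direction $\overrightarrow F$), each of codimension $c\leq t$ in $H$ and of cardinality $\abs{H}/p^c$, so $\abs{A\cap F_j}\leq(1+\epsilon)\alpha'\abs{H}/p^c$ for every $j$. Then $\abs{A\cap F}=\abs{A\cap H}-\sum_{j=2}^{p^c}\abs{A\cap F_j}\geq\alpha'\abs{H}-(p^c-1)(1+\epsilon)\alpha'\abs{H}/p^c$, and dividing by $\abs{F}=\abs{H}/p^c$ yields
\[
\frac{\abs{A\cap F}}{\abs{F}}\;\geq\;\alpha'\bigl(p^c-(p^c-1)(1+\epsilon)\bigr)\;=\;\alpha'\bigl(1-(p^c-1)\epsilon\bigr)\;\geq\;\alpha'(1-p^t\epsilon)\;\geq\;\alpha(1-p^t\epsilon),
\]
the last inequality using $\alpha'\geq\alpha$ when $p^t\epsilon\leq 1$ and being vacuous otherwise.

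This is a routine instance of the density-increment method, so I do not anticipate a serious obstacle. The two steps deserving care are: (i) confirming that each iteration step multiplies the relative density by the definite factor $1+\epsilon$ — this is exactly what forces termination within $O(\epsilon^{-1}\log\alpha^{-1})$ steps and thereby controls $\codim_W H$; and (ii) the loss of a factor $\approx p^t$ in passing from the upper to the lower bound, which is unavoidable because $F$ may be a single one of the $p^t$ parallel slices making up $H$.
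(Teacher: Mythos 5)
Your proof is correct and follows essentially the same route as the paper's: a density-increment iteration in which each step costs at most $t$ codimensions and multiplies the density by at least $1+\epsilon$, hence terminates after $O(\epsilon^{-1}\log\alpha^{-1})$ steps, followed by summing the upper bound over the $p^c$ parallel cosets of $F$ in $H$ to get the lower bound. One remark: your stopping rule (threshold relative to the current density $\alpha_i$) delivers the upper bound $(1+\epsilon)\alpha'$ rather than the $(1+\epsilon)\alpha$ literally written in the lemma, and you say so explicitly; this is not a defect of your argument but a feature of the method, since the paper's own iteration yields exactly the same bound (indeed $F=H$ shows the stated form with the original $\alpha$ cannot hold once $\alpha'>(1+\epsilon)\alpha$).
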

\begin{proof} 
Let us prove the first conclusion. 
If $W$ does the trick already, we do nothing.
If not, there exists a subspace $H$ of codimension at most $t$
such that $\frac{\abs{A\cap H}}{\abs{H}}>\alpha(1+\epsilon)$.
We replace $W$ by $H$, and $A$ by $A\cap H$.
And we iterate.
We duplicate  the density in at most $\epsilon^{-1}$ iterations. And we may duplicate up to $\log\alpha^{-1}$ times before hitting 1.
At every iteration we may lose up to $t$ dimensions.
Whence the first conclusion. The second conclusion follows from summing the upper bound over all cosets of $F$.
\end{proof}

In particular, when $t=1$, the following corollary says that we can always suppose that a set $A \subset W$ is pseudorandom, at the cost of passing to a subset in an affine subspace.

\begin{cor} \label{cor:regularity} Let $W$ be an affine subspace of $V$ and $A\subset W$ have
density $\alpha$. Let $\epsilon >0$.
Then there exists an affine subspace $H\leq W$ of codimension 
$O( (\alpha\epsilon)^{-1/2}\log\alpha^{-1})$ such 
that $A'=A\cap H$ has density $\geq \alpha$ and is $\epsilon$-pseudorandom in $H$.
\end{cor}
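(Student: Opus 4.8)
The plan is to deduce the corollary from the regularity lemma (Lemma~\ref{ExtendedRegularity}) applied with $t=1$ and a well-chosen value of its parameter, using the density-increment lemma (Lemma~\ref{lem:densityincrement}) to turn a hypothetical failure of pseudorandomness into a density increment on a codimension-$1$ affine subspace, which then contradicts the conclusion of Lemma~\ref{ExtendedRegularity}.

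Concretely, first I would invoke Lemma~\ref{ExtendedRegularity} with $t=1$ and $\epsilon' = \tfrac14(\alpha\epsilon)^{1/2}$. This furnishes an affine subspace $H\leq W$ of codimension $O(\epsilon'^{-1}\log\alpha^{-1}) = O((\alpha\epsilon)^{-1/2}\log\alpha^{-1})$ such that $A':=A\cap H$ has density $\alpha'\geq\alpha$ and every affine subspace $F\leq H$ of codimension at most $1$ satisfies $\tfrac{\abs{A\cap F}}{\abs{F}}\leq \alpha(1+\epsilon')$. It remains to check $A'$ is $\epsilon$-pseudorandom in $H$. The lower bound $\alpha'^4\leq \nor{1_{A'}}_{U^2(H)}^4$ is automatic, so suppose towards a contradiction that $\nor{1_{A'}}_{U^2(H)}^4 > \alpha'^4(1+\epsilon)$. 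Running the computation \eqref{eq:u2} for $A'$ inside $H$ (i.e. combining $\nor{1_{A'}}_{U^2(H)}^4 = \sum_{\chi}\abs{\widehat{1_{A'}}(\chi)}^4$ with Parseval's identity $\sum_{\chi}\abs{\widehat{1_{A'}}(\chi)}^2 = \alpha'$) yields $\max_{\chi\in\widehat H,\,\chi\neq 1}\abs{\widehat{1_{A'}}(\chi)}^2 > \alpha'^3\epsilon \geq \alpha^3\epsilon$, hence $\abs{\widehat{1_{A'}}(\chi)} > \alpha^{3/2}\epsilon^{1/2}$ for some nontrivial $\chi$. Applying Lemma~\ref{lem:densityincrement} inside $H$ with $\beta = \alpha^{3/2}\epsilon^{1/2}$ produces an affine subspace $F\leq H$ of codimension $1$ on which $A'\cap F = A\cap F$ has density at least $\alpha' + \beta/2 \geq \alpha\bigl(1+\tfrac12(\alpha\epsilon)^{1/2}\bigr) > \alpha(1+\epsilon')$, contradicting the defining property of $H$. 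Therefore $A'$ is $\epsilon$-pseudorandom in $H$, and the codimension and density claims have already been recorded.

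I do not expect a genuine obstacle here; the only point requiring a little care is bookkeeping of constants, namely choosing $\epsilon'$ as a fixed fraction of $(\alpha\epsilon)^{1/2}$ that is strictly smaller than the factor $\tfrac12$ appearing in the density increment of Lemma~\ref{lem:densityincrement}, so that the contradiction is clean, while simultaneously keeping $\epsilon'^{-1} = O((\alpha\epsilon)^{-1/2})$ so that Lemma~\ref{ExtendedRegularity} delivers exactly the advertised codimension bound.
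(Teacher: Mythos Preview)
Your proposal is correct and follows exactly the paper's approach: apply Lemma~\ref{ExtendedRegularity} with $t=1$ and $\epsilon'$ a constant multiple of $(\alpha\epsilon)^{1/2}$, then use Lemma~\ref{lem:densityincrement} with $\beta=\alpha^{3/2}\epsilon^{1/2}$ to derive a contradiction if $A'$ were not $\epsilon$-pseudorandom. The paper's proof is the one-line version of this (with $\epsilon'=\tfrac12(\alpha\epsilon)^{1/2}$ rather than your $\tfrac14(\alpha\epsilon)^{1/2}$), and your more conservative constant simply makes the strict inequality in the contradiction unambiguous.
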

\begin{proof}
We use Lemma 6 with $\beta=\alpha^{3/2}\epsilon^{1/2}$,
and Lemma 7 with $t=1$ and $\epsilon'=\alpha^{1/2}\epsilon^{1/2}/2$ to obtain the conclusion.
\end{proof}

Our next tool is a standard lemma resulting from the combination
of the Balog-Szemer\'edi-Gowers and Freiman-Ruzsa theorems.
A useful reference for this lemma is \cite[Lecture 2]{Green}. We reproduce the proof as 
we want to incorporate the quasipolynomial bound
of Sanders \cite[Theorem 11.4]{Sanders}
for the Freiman-Ruzsa theorem.
\begin{lm}
\label{BSGFR}
Let $W\leq V$ be $\F$-vector spaces and $A\subset W$ have
density $\alpha$. 
Let $c>0$ be a constant.
Suppose $\xi : A\rightarrow V$ is such that are at least $c\abs{A}^{3}$ additive quadruples in the graph  
$\Gamma=\{(y,\xi(y))\mid y\in A\}$.
Then there is a subset $S\subset A$ such that
$\xi_{\mid S}$ coincides with an affine-linear map.
Moreover, the density of $S$ in $A$ can be taken
quasipolynomial in $c$, that is $\abs{S}\gg \abs{A}\exp(-\log^{O(1)} c^{-1})$.
\end{lm}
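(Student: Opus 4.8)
The plan is to run the standard Balog--Szemer\'edi--Gowers plus Freiman--Ruzsa machine on the graph $\Gamma$, and then to extract the affine-linear piece by a pigeonholing argument that exploits the fact that $\Gamma$ is a graph over $W$.

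First I would translate the hypothesis. Since $\abs{\Gamma}=\abs{A}$, the assumption that $\Gamma$ contains at least $c\abs{A}^3$ additive quadruples says exactly that the number of additive quadruples in $\Gamma$ (inside the ambient group $W\times V$) is at least $c\abs{\Gamma}^3$, which is the hypothesis of the Balog--Szemer\'edi--Gowers theorem. That theorem yields a subset $\Gamma'\subset\Gamma$ with $\abs{\Gamma'}\gg c^{O(1)}\abs{\Gamma}$ and $\abs{\Gamma'+\Gamma'}\ll c^{-O(1)}\abs{\Gamma'}$. Feeding this doubling bound into Sanders' quasipolynomial Freiman--Ruzsa theorem \cite[Theorem 11.4]{Sanders} produces a subspace $H\leq W\times V$ and a point $z\in W\times V$ with $\Gamma'\subset z+H$ and $\abs{H}\leq \exp(\log^{O(1)} c^{-1})\,\abs{\Gamma'}$; write $K=\exp(\log^{O(1)} c^{-1})$ for this factor.

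The key observation is that $H$ cannot be too ``vertical''. Let $\pi:W\times V\to W$ be the projection onto the first coordinate and set $H_0=H\cap(\{0\}\times V)$. Because $\Gamma'$ is a subset of the graph $\Gamma$, the map $\pi$ is injective on $\Gamma'$, so $\abs{\Gamma'}=\abs{\pi(\Gamma')}\leq \abs{\pi(z+H)}=\abs{H}/\abs{H_0}$, whence $\abs{H_0}\leq \abs{H}/\abs{\Gamma'}\leq K$. Now choose a complement $H'$ of $H_0$ inside $H$. Since $H'$ meets $\{0\}\times V$ trivially, it is the graph of a linear map defined on $\pi(H)$, and therefore each of the $[H:H']=\abs{H_0}\leq K$ cosets of $H'$ contained in $z+H$ has the form $\{(w,\ell(w)):w\in\pi(z+H)\}$ for some affine-linear map $\ell$ (the various $\ell$ differing only by a constant in $V$). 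By pigeonhole one such coset, say the graph of $\ell$, contains a subset $\Gamma''\subset\Gamma'$ with $\abs{\Gamma''}\geq\abs{\Gamma'}/K$. Setting $S=\pi(\Gamma'')\subset A$, every $y\in S$ satisfies $(y,\xi(y))\in\Gamma''$, i.e.\ $\xi(y)=\ell(y)$, so $\xi_{\mid S}$ agrees with an affine-linear map; and $\abs{S}=\abs{\Gamma''}\geq\abs{\Gamma'}/K\gg c^{O(1)}K^{-1}\abs{A}=\abs{A}\exp(-\log^{O(1)} c^{-1})$, the polynomial factor $c^{O(1)}$ being absorbed into the quasipolynomial one.

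I expect the only genuinely delicate point to be the verticality step: one must notice that the graph structure of $\Gamma$ (hence of $\Gamma'$) forces $H_0=H\cap(\{0\}\times V)$ to have size at most $K$, which is exactly what makes the final pigeonholing cost only a quasipolynomial factor rather than ruining the bound. The Balog--Szemer\'edi--Gowers and Freiman--Ruzsa inputs are used as black boxes; as the authors note, the point of rewriting the proof is simply to record the quasipolynomial dependence on $c$ by invoking Sanders' version of Freiman--Ruzsa in place of an earlier one with exponential bounds.
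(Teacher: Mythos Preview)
Your proof is correct and follows essentially the same route as the paper: apply Balog--Szemer\'edi--Gowers, then Sanders' quasipolynomial Freiman--Ruzsa to place (a large piece of) $\Gamma$ inside a coset of a small subspace $H$, bound the vertical part $H\cap(\{0\}\times V)$ using the injectivity of the projection on the graph, and pigeonhole over cosets of a complement to extract the affine piece. The only cosmetic difference is that the paper invokes the ``large subset with small span'' formulation of Freiman--Ruzsa while you use the ``covered by a small coset'' formulation, but the subsequent verticality/pigeonhole step is identical.
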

\begin{proof}
First, the Balog-Szemer\'edi-Gowers theorem implies that there exists a set $A'\subset A$
satisfying  $\abs{A'}\geq C\abs{A}$ that induces
a subgraph $\Gamma'\subset\Gamma$ satisfying $\abs{\Gamma'+\Gamma'}\leq C'\abs{\Gamma}$, where both $C$ and $C'$ can be taken polynomial in $c$. 
Using the Freiman-Ruzsa theorem (with Sanders' bounds
from \cite[Theorem 11.4]{Sanders}), 
we get a subgraph $\Gamma''\subset \Gamma'$ corresponding to
a subset $A''\subset A'$
satisfying $\abs{\Gamma''}\geq D\abs{\Gamma'}$ 
and $\abs{\Span(\Gamma'')}\leq E\abs{\Gamma''}$
with $D$ polynomial and $E$ quasipolynomial 
in $c$.
Write $H=\Span(\Gamma'')\leq W\times V$ 
and $\pi : H\rightarrow W$ the canonical projection of $W\times V$ to the first coordinate restricted to $H$.
Then $\pi(H)\supset A''$ by definition.
Because $\abs{H}\leq E\abs{A''}$,
the size of the kernel of $\pi$ is at most $E$.
Then we can partition $H$ into at most $E$ cosets of some subspace $H'$ so that $\pi$ is injective on each of them.
By the pigeonhole principle, there exists such a coset that has a large intersection with $\Gamma''$, that is,
an $x\in W$ such that
$$\abs{(x+H')\cap \Gamma''}\geq \abs{\Gamma''}/E.$$
Let now $\Delta=(x+H')\cap \Gamma''$ and $S$ be the corresponding subset of $A''$. The map $\pi_{\mid x+H'}$
is a bijection onto its image, an affine space $M\leq V$.
Its inverse function is an affine map $\psi : M\rightarrow W$ such that $(s,\psi(s))\in \Gamma''$ for 
all $s\in S$, that is, $\psi(s)=\xi(s)$.
Moreover,
$$\abs{S}=\abs{\Delta}\geq\abs{A''}/E\geq K\abs{A}$$
where $K$ is quasipolynomial in $c$.
\end{proof}
We will also need the following lemma. 
\begin{lm} \label{lem:partition}
Let $A$ be a finite set and $T \subset \{ (a_1, a_2, a_3, a_4) \in A^4 \mid \textup{$a_i$ are pairwise distinct} \}$. Then there is a partition $A = \cup_{i=1}^4 A_i$ such that
$|T \cap A_1 \times A_2 \times A_3 \times A_4 | \geq |T|/256.$
\end{lm}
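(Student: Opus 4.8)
The plan is to use a simple probabilistic (random colouring) argument. Assign to each element of $A$, independently and uniformly at random, a colour in $\{1,2,3,4\}$, and let $A_i$ be the set of elements receiving colour $i$; this produces a random partition $A = \cup_{i=1}^4 A_i$. The crucial point is that for a fixed tuple $(a_1,a_2,a_3,a_4) \in T$, the coordinates $a_1,\dots,a_4$ are pairwise distinct, so the four events $\{a_i \in A_i\}$ ($i=1,\dots,4$) are mutually independent, each of probability $1/4$. Hence
$$\P\big((a_1,a_2,a_3,a_4) \in A_1 \times A_2 \times A_3 \times A_4\big) = 4^{-4} = \tfrac{1}{256}.$$

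First I would set up this random partition and apply linearity of expectation to get
$$\E \abs{ T \cap (A_1 \times A_2 \times A_3 \times A_4) } = \sum_{(a_1,\dots,a_4) \in T} \P\big((a_1,\dots,a_4) \in A_1 \times A_2 \times A_3 \times A_4\big) = \frac{\abs{T}}{256}.$$
Since a real-valued random variable attains a value at least its expectation with positive probability, there is some outcome of the random colouring for which $\abs{ T \cap (A_1 \times A_2 \times A_3 \times A_4) } \geq \abs{T}/256$, and the corresponding partition is the one we want.

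I do not expect any genuine obstacle here; the one place that requires attention is the appeal to pairwise distinctness of the coordinates, which is exactly what makes the four colour-assignment events independent — without it the probability of landing in the product set could decrease or vanish and the bound would break. A minor remark is that the partition produced may in principle have empty parts, but this is harmless: if $T = \emptyset$ the claim is trivial, and otherwise any outcome attaining the expectation bound must have the four relevant parts nonempty.
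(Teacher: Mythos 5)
Your proposal is correct and is essentially identical to the paper's proof: a uniform random $4$-colouring of $A$, independence of the four events thanks to the pairwise distinctness of the coordinates, linearity of expectation giving $\E\abs{T\cap A_1\times A_2\times A_3\times A_4}=\abs{T}/256$, and then choosing an outcome attaining the expectation. No issues.
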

\begin{proof}
We use the probabilistic method. For a random partition of $A$ where each $y\in A$ is assigned 
a part $A_i$ with $i\in\{1,2,3,4\}$ chosen independently, uniformly with probability 1/4, we have
\begin{equation}
\label{expectation}
\E[\abs{T\cap A_1\times A_2\times A_3\times A_4}]=
\E \sum_{(x_1,\ldots,x_4)\in T}\prod_{i=1}^4 1_{x_i\in A_i}
=\sum_{(x_1,\ldots,x_4)\in T}
\E\prod_{i=1}^4 1_{x_i\in A_i}.
\end{equation}
Let $(x_1,\ldots,x_4)\in T$. In particular the $x_i$
are pairwise distinct.
Then by uniform distribution and independence, 
for any $(m_1,\ldots,m_4)\in[4]^4$,
we have
$$
\P (x_i\in A_{m_i}\text{ for each }i\in [4])
=4^{-4}.
$$
Together with equation \eqref{expectation},
this implies that
$$
\E[\abs{T\cap A_1\times A_2\times A_3\times A_4}]=\abs{T}/256
$$
so there must be a partition with $\abs{T\cap A_1\times A_2\times A_3\times A_4} \geq \abs{T}/256$.
\end{proof}

\section{Deducing the main theorem from an iteration scheme} \label{sec:deduce}
Let $P\subset V\times V$ have density $\delta >0$.
We first apply the linear Bogolyubov theorem (Theorem \ref{trm:linBogo}). Write $P=\cup_{y\in V}B_y\times \{y\}$.
Because $P$ has density $\delta$, the set $A$ of
elements $y\in V$ such that 
$\abs{B_y}\geq \delta \abs{V}/2$ has density at least $\delta /2$.
Using Theorem \ref{trm:linBogo} on each set $B_y$ for $y\in A$, we see that
$\phi_H(P)$
contains a set $P'=\cup_{y\in A}V_y\times \{y\}$
where $V_y$ is a subspace
of codimension $O(\log^{4}1/\delta)$. 

From now on, we will assume that $P=\cup_{y\in A}V_y\times \{y\}$, and we will show that $\phi_H \phi_V (P)$ contains the desired bilinear structure. We achieve this through the following iteration scheme.
Let $V^{*}$ be the linear dual of $V$, that is, the set of linear forms on $V$. For $(x,\xi)\in V\times V^{*}$, 
we denote $x\cdot\xi=\xi(x)$. For a set $U \subset V$, we let $U^\perp=\{\xi\in V^*\mid \forall x\in U,\, x\cdot\xi=0\}$. Also, for a set $T \subset V^*$, we let $T^\perp=\{x \in V \mid \forall \xi \in T,\, x\cdot\xi=0\}$.

\begin{prop}
\label{codimensionReduction}
Let $V$ be an $\F$-vector space, and $W$ be an affine subspace. 
Let $r\leq \dim V$ be an integer and $\alpha>0$. 
Let $\epsilon=p^{-r}/256$.
Then there exists
a constant $c(r,\alpha)$ such that the following holds.
Let $A\subset W$ 
be an $\epsilon$-pseudorandom subset
of density $\alpha$.
Let $P=\cup_{y\in A}V_y\times\{y\} \subset V \times W$ where each $V_y$ is a subspace
of codimension at most $r$.
Suppose there exist $s\leq r$ and affine maps $\xi_1,\ldots,\xi_s$ from
$W$ to $V^*$ and 
spaces $U_y\leq V^*$ for $y\in A$ of dimension at most $r-s$
such that $V_y^\perp=\Span(\xi_j(y))_{j\in [s]} + U_y$.
Then at least one of the following statements holds. 
\begin{enumerate}
\item (Termination) The set $\phi_V(P)$ contains
$$
\{(x,y)\in X_3\times W_2\mid x\cdot \overrightarrow{\xi_1}(y)=\cdots=
x\cdot \overrightarrow{\xi_s}(y)=0\}
$$
where $\overrightarrow{\xi}$ denotes the linear part of
an affine map $\xi$,
$W_2$ is the direction of $W$ and $X_3$ is a subset of density at least $p^{-r}/12$ in $V$.
\item (Reduction of codimension) There exist a set $S\subset A$ of density $c(r,\alpha)$, a subspace $V'\leq V$ of codimension at most $3r+10$ and subspaces $V'_y\leq V'$ of codimension at most $r-1$ (inside $V'$) for each 
$y\in S$ such that $P\supset \bigcup_{y\in S} V'_y\times \{y\}$.
Besides there exist
affine maps $\xi'_1,\ldots,\xi'_{s-1}$ from
$W$ to $V'^*$ and 
spaces $U'_y\leq V'^*$ for $y\in S$ of dimension at most $r-s$
such that $(V'_y)^\perp=\Span(\xi_j(y))_{j\in [s-1]} + U'_y$ (where $(V'_y)^\perp$ is defined inside $V'^*$).
\item (Linearisation) There exist a set $S\subset A$ of density $c(r,\alpha)$ and
an affine map $\xi_{s+1}:W\rightarrow V$ and a space $U'_y< U_y$ of dimension at most $r-s-1$ such that 
for all $y\in S$, we have $V_y^\perp= \Span(\xi_1(y),\ldots,\xi_{s+1}(y))+U'_y$.
\end{enumerate}
Moreover $c(r,\alpha)$ can be taken quasipolynomial
in $\alpha p^{-r}$, that is,
$$c(r,\alpha)=O(\exp(\log^{O(1)}(\alpha^{-1}p^r))).$$ 
\end{prop}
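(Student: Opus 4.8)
The plan is to unwind $\phi_V$ fibrewise and then run a trichotomy driven by the linear Bogolyubov theorem in each fibre. For $x\in V$ put $A_x:=\{y\in A\mid x\in V_y\}$; then
\[
\phi_V(P)=\bigcup_{x\in V}\{x\}\times(2A_x-2A_x).
\]
Since $\xi_j(y)\in V_y^\perp$, the membership $x\in V_y$ forces $x\cdot\xi_j(y)=0$ for every $j\in[s]$, so $A_x$ lies in the affine subspace $F_x:=\{y\in W\mid x\cdot\xi_j(y)=0,\ j\in[s]\}$ of $W$, which has codimension at most $s\le r$ and direction exactly $\{y\in W_2\mid x\cdot\overrightarrow{\xi_j}(y)=0,\ j\in[s]\}$. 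Hence $2A_x-2A_x$ is always contained in this direction, and the Termination conclusion is precisely the assertion that equality holds for every $x$ in a set $X_3$ of density at least $p^{-r}/12$.

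First I would isolate the fibres that are not too small. Since $\E_{x\in V}\frac{|A_x|}{|A|}=\E_{y\in A}\frac{|V_y|}{|V|}\ge p^{-r}$ and $|A_x|\le|A|$, a reverse Markov inequality yields a set $X_1\subseteq V$ of density at least $p^{-r}/2$ on which $|A_x|\ge\tfrac12\alpha p^{-r}|W|$, so the density $\alpha_x:=|A_x|/|F_x|$ of $A_x$ in $F_x$ is at least $\tfrac12\alpha p^{-r}$. Now I split. If at least a third of $X_1$ consists of $x$ for which $2A_x-2A_x$ fills the direction of $F_x$, then Termination holds with that subset as $X_3$. Otherwise a set $X_2$ of density at least $p^{-r}/3$ of ``defective'' $x$ remains, and Lemma~\ref{lem:wholespace} read contrapositively, together with Parseval ($\sum_\psi|\widehat{1_{A_x}}^{F_x}(\psi)|^2=\alpha_x$), attaches to each such $x$ a nontrivial character $\psi_x$ of the direction of $F_x$ with $|\widehat{1_{A_x}}^{F_x}(\psi_x)|\ge\alpha_x^{3/2}$. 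Unwinding this coefficient and writing $\psi_x$ through a linear functional $\ell_x$ on $W_2$ (nontrivial modulo the annihilator of the direction of $F_x$), the defect says that on $A$ the weight $y\mapsto\psi_x(y)$ correlates with $1_{x\in V_y}$ far more strongly than the $\epsilon$-pseudorandomness of $A$ alone would permit, and the excess must come from the way the spaces $V_y^\perp$ (equivalently the $U_y$) vary with $y$.

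The heart of the proof is to convert the family $(x,\ell_x,A_x)_{x\in X_2}$ into one global structure, and this is where the remaining additive-combinatorial tools enter. I would use Corollary~\ref{cor:regularity} and Lemma~\ref{lem:densityincrement} to turn the correlation of $A_x$ with $\ell_x$ into genuine additive structure, and Lemma~\ref{lem:partition} to pass to a generic piece of the incidence set $\{(x,y)\mid x\in V_y\}$. One then separates two regimes. In the ``rigid'' regime the defect is explained by the spaces $V_y^\perp$ all meeting a common subspace $T\le V^*$ of dimension $O(r)$ for a dense set of $y$; taking $V'=T^\perp$, intersecting each $V_y$ with $V'$ drops its codimension to at most $r-1$, and a short linear-algebra computation then supplies $\xi'_1,\dots,\xi'_{s-1}$ and the spaces $U'_y$ while keeping $\codim V'\le 3r+10$, which is the Reduction outcome. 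In the ``mobile'' regime one selects $\zeta(y)\in U_y\setminus\{0\}$ so that the graph $\{(y,\zeta(y))\mid y\in S_0\}$ has a proportion polynomial in $\alpha p^{-r}$ of additive quadruples on a dense $S_0\subseteq A$; Lemma~\ref{BSGFR} linearises $\zeta$ on a further dense $S\subseteq S_0$, and the resulting affine map is the sought $\xi_{s+1}$, which is the Linearisation outcome. In both regimes the density loss from $A$ to $S$ is governed by the quasipolynomial bound of Lemma~\ref{BSGFR}, giving the stated $c(r,\alpha)=O(\exp(\log^{O(1)}(\alpha^{-1}p^r)))$.

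The step I expect to be the main obstacle is exactly this last passage: extracting from the purely fibrewise correlations a clean dichotomy between a fixed ``bad'' subspace $T\subseteq V^*$ that drives Reduction and an affine family $y\mapsto\xi_{s+1}(y)\in U_y$ that drives Linearisation, while keeping every codimension within the budget $3r+10$ and every density loss quasipolynomial. Making the bookkeeping of the nested spaces $V_y\ge V'_y$ and $U_y\ge U'_y$ line up with the conclusions of the Balog-Szemer\'edi-Gowers and Freiman-Ruzsa theorems is the delicate point, and is where the precise choice $\epsilon=p^{-r}/256$ of the pseudorandomness parameter gets used.
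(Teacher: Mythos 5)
Your skeleton is right---write $\phi_V(P)=\bigcup_{x\in V}\{x\}\times(2A_x-2A_x)$, show that either most fibres $A_x$ are pseudorandom inside $B_x$ (Termination) or extract additive structure, and use Lemma~\ref{BSGFR} for Linearisation---but the step you yourself flag as ``the main obstacle'' is precisely the idea the paper supplies and your sketch does not. Attaching to each defective $x$ its own large Fourier coefficient $\psi_x$ leaves you with a family of unrelated characters, one per fibre, and no visible way to merge them into a global object. The paper never isolates individual characters: it averages the full $U^2$ norm over $x$, obtaining (with $m=\dim W$)
\[
\E_{x\in V}\nor{1_{A_x}}_{U^2(B_x)}^4
\;\le\; \frac{1}{p^{3(m-s)}}\sum_{\substack{(y_1,\dots,y_4)\in A^4\\ y_1+y_2=y_3+y_4}} p^{-\codim\bigcap_{i}V_{y_i}},
\]
so that failure of Termination forces a proportion at least $p^{-4r+s}\epsilon$ of the additive quadruples of $A$ to satisfy $\codim\bigcap_i V_{y_i}<4r-s$, i.e.\ the $4r$ vectors $\xi_j(y_i)$ obey an extra linear relation beyond the $s$ obvious ones (Lemma~\ref{randomnessLinearity}). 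That single combinatorial statement is what drives your ``rigid/mobile'' dichotomy: pigeonholing over the $p^{4r}$ possible relations, either the relation permits elimination of one $y_i$, whence a counting argument with $r(\phi)=\abs{\{y\mid\phi\in V_y^\perp\}}$ produces one popular nonzero $\phi\in V^*$ lying in $V_y^\perp$ for a $\gg\kappa\alpha p^{-6r}$ proportion of $y$ and Reduction holds with $\codim V'=1$ (not a subspace $T$ of dimension $O(r)$), or the relation genuinely involves all four points and Lemma~\ref{lem:partition} plus Lemma~\ref{BSGFR} give Linearisation.

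You also omit the preliminary degenerate case, which is where the bound $3r+10$ actually comes from: if some nonzero combination $\sum_j\lambda_j\xi_j$ has linear part of rank less than $3r+10$, Reduction follows immediately with $V'=\Span(\xi'_0(y)\mid y\in A)^\perp$. Excluding this case is not optional bookkeeping; it is what guarantees $\codim B_x=s$ for all but an $\epsilon p^{-r}/4$ fraction of $x$, without which the normalisation $\abs{B_x}=p^{-s}\abs{W}$ in the $U^2$ average---and hence the whole Termination/quadruple dichotomy---collapses. Finally, Corollary~\ref{cor:regularity} and Lemma~\ref{lem:densityincrement} are not used inside this proposition at all (they belong to the deduction of Theorem~\ref{mainTrm} from it), so your plan to deploy them here to ``turn the correlation into additive structure'' points at the wrong mechanism.
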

Since the statement of Proposition \ref{codimensionReduction} looks complicated, an explanation is in order. We can think of the maps $\xi_1,\ldots,\xi_s$ as the number of simultaneous constraints that the spaces $(V_y)_{y \in A}$ have to satisfy. (Thus at the beginning, $s=0$ since we don't have any information on the $V_y$ yet.) At each step, either the codimension of the $V_y$ in $V$ is reduced (the second alternative) or the number of constraints is increased (the third alternative). Clearly this process must stop when $r=0$ (i.e., $V_y=V$ for all $y$) or when $s=r$ (i.e., all the $V_y$ are given by $r$ simultaneous constraints). In either case the $V_y$ are very structured, which gives us the desired bilinear structure. We will now make this argument rigorous while keeping track of the bounds.
 
 \begin{proof}[Proof of Theorem \ref{mainTrm} using Proposition \ref{codimensionReduction}]
Applying Corollary \ref{cor:regularity} with $\epsilon=p^{-r}/256$ and $r=O(\log^{O(1)}\alpha^{-1})$, 
we obtain an affine subspace $W^{(0)}$ of $V$ of codimension 
\[
O((\epsilon \alpha)^{-1/2} \log \alpha^{-1}) = O(p^{r/2} \alpha^{-2/3})
\]
such that the set $A_0 := A \cap W^{(0)}$ has density $\alpha_0 \geq \alpha$ and is $\epsilon$-pseudorandom in $W_0$. 
We set $V^{(0)}=V, P_0 = \cup_{y\in A_0} V_y\times\{y\} \subset P$ and apply Proposition \ref{codimensionReduction} with the tuple $(V^{(0)}, W^{(0)}, A_0, P_0)$ and $s_0=0,r_0=r$. 

If the first alternative of Proposition \ref{codimensionReduction} holds, we stop.

Suppose the second alternative of Proposition \ref{codimensionReduction} holds. We set $V^{(1)} \subset V^{(0)}$ to be the subspace $V'$ given by the second alternative, of codimension $O(r)$. We are also given subspaces $V^{(1)}_y\leq V^{(1)}$ of codimension at most $r_1= r-1$
such that
$P\supset \bigcup_{y\in A}V^{(1)}_y\times\{y\}$.

Suppose the third alternative holds.
We obtain a set $S \subset A_0$ of density $c(r, \alpha_0)$ in $W^{(0)}$, an affine map $\xi_1 : W_0\rightarrow V^*$
and subspaces
$U^{(1)}_y\leq V_y^*$ of dimension at most $r_1\leq
r-1$ such that 
$V_y^\perp=\Span(\xi_1(y))+U^{(1)}_y$.
Then we let $V^{(1)}=V$ and $V^{(1)}_y=V_y$.
We can find an affine subspace $W^{(1)} \subset W^{(0)}$ of codimension $O(p^{r/2} \alpha_0^{-2/3})$ in $W^{(0)}$
such that the set $A_1 := S \cap W^{(1)}$ is $\epsilon$-pseudorandom and has density $\alpha_1 \geq c(r, \alpha_0)$ in $W^{(1)}$. Let $s_1=1$ and $r_1=r$.

Set $P_1 = \bigcup_{y\in A_1} V^{(1)}_y\times\{y\}$. We have $P_0\supset P_1$.

We can now apply Proposition \ref{codimensionReduction} with $(V^{(1)}, W^{(1)}, A_1, P_1,r_1,s_1)$ and start an iterative process. 
This iterative process stops whenever one can
apply the first item of Proposition \ref{codimensionReduction}, or when
$r-s$ vanishes. When applying either of the last
two alternatives, at least one of the parameters $r$ or $r-s$
is decreased by at least one, while the other one cannot increase, so the iteration does eventually stop.

At the $i$-th stage, we obtain a subspace $V^{(i)} \subset V$ of codimension $O(ri)$, an affine subspace $W^{(i)} \subset W$ of codimension
\[
O(\exp(\log^{C^i}\alpha^{-1})),
\]
where $C$ is a constant (depending at most on $p$),
an $\epsilon$-uniform set $A_i \subset W^{(i)}$ of density  
\[
\alpha_i = \Omega(\exp(-\log^{C^i}\alpha^{-1}))
\]
and a set
\[
P_i = \cup_{y\in A_i} V^{(i)}_y\times\{y\} \subset V^{(i)} \times W^{(i)}
\]
where each $V^{(i)}_y \subset V^{(i)}$ has codimension $r_i\leq r$. 
Besides, we have affine maps
$\xi_1,\ldots,\xi_{s_i}$ from $W^{i}$ to $V^{(i)*}$
and subspaces $U^{(i)}_y\leq V^{(i)*}$ of dimension at most
$r_i-s_i$
such that $(V^{(i)}_y)^\perp=\Span(\xi_1(y),\cdots,\xi_{s_1}(y))+U^{(i)}_y$. Furthermore, 
$
P \supset P_i.
$
Suppose the algorithm stops after the $i$-th iteration, where $i \leq 2r$. Note that we have $s_i\leq r$,
\[
\codim V^{(i)} = O(r^2) = O(\log^{O(1)}\alpha^{-1}),
\]
and 
\[
\codim W^{(i)}=O(\exp(\log^{C^r}\alpha^{-1}))=O(\exp(\exp(\exp(\log^{O(1)}\alpha^{-1})))).
\]
There are two possibilities.

\noindent \textbf{Case 1:} 
$r_i=s_i$, and
\[
P_i= \{ (x,y) \in V^{(i)}\times A_i \mid  x\cdot\xi_1(y)=\cdots=x\cdot\xi_{s_i}(y)=0 \} 
\]
where $ \xi_1, \ldots, \xi_{s_i}$ are affine maps from $W^{(i)}$ to $V^{(i)*}$, $A_i \subset W^{(i)}$ is a set of density $\gamma:=\alpha_i = \Omega \left( \exp(-\exp(\exp(\log^{O(1)}\alpha^{-1}))) \right)$.

\noindent \textbf{Case 2:} The first alternative of Proposition \ref{codimensionReduction} holds, and 
\[ \phi_V(P_i) \supset \{(x,y)\in X \times W_2 \mid x\cdot \overrightarrow{\xi_1}(y)=\cdots=
x\cdot \overrightarrow{\xi_{s_i}}(y)=0\},
\]
where $ \xi_1, \ldots, \xi_{s_i}$ are affine maps from $W^{(i)}$ to $V^{(i)*}$, $X \subset V^{(i)}$ is a set of density 
$\Omega \left( p^{-r_i} \right) = \Omega \left( \exp \left( - \log^{O(1)} \alpha^{-1} \right)\right)$ and $W_2$ is the direction of $W^{(i)}$.

Since the two cases are similar, we will work with Case 1. By translating $P$ by $(0,a)$ for some $a \in A_i$ if necessary, we may assume that $W^{(i)}$ is a vector subspace of $V$. 
Let $\eta:=  \frac{1}{10} \gamma^{3/2} p^{-r-1} $. Applying Lemma \ref{ExtendedRegularity} with $t=r+1$, there is a subspace $H\leq W^{(i)}$ of codimension 
$O(r \eta^{-1} \log\gamma^{-1})$ such 
that $\abs{A'}= \gamma'\abs{H}$ (where $A'=A_{i}\cap H$) with $\gamma'\geq \gamma$
and for any subspace $F$ of codimension at most $r+1$ of $H$, $\frac{\abs{A'\cap F}}{\abs{F}}\leq \gamma(1+\eta)$.

For each $x \in V^{(i)}$, let $B_x = \{ y \in H \mid x\cdot \xi_1(y)=\cdots=
x\cdot \xi_{s_i}(y)=0 \}$. Then $B_x$ is a subspace of codimension at most $r$ inside $H$.
Let $A_x = A'\cap B_x$. We claim that $2A_x - 2A_x = \overrightarrow{B_x}$. 

By Lemma \ref{lem:wholespace}, it suffices to show
$\abs{\widehat{1_{A_x}}(\chi)}<\gamma_x^{3/2}$ for any $\chi\neq 1$, where $\gamma_x$ is the density of
$A_x$ in $B_x$. 

Suppose for a contradiction that this is not true. Then Lemma \ref{lem:densityincrement} implies that there  is a hyperplane $F$ of $B_x$ 
on which the density of $A$ is at least $\gamma_x+\gamma_x^{3/2}/2$. 
From Lemma \ref{ExtendedRegularity} we also have $\gamma_x\geq \gamma(1-\eta p^{r+1})$.
Therefore, 
\begin{eqnarray}
\gamma_x+\gamma_x^{3/2} /2 &\geq & \gamma(1-\eta p^{r+1})+\frac{1}{2} \gamma^{3/2}(1-\eta p^{r+1})^{3/2} \nonumber \\
&\geq& \gamma(1-\eta p^{r+1}) + \frac{1}{2} \gamma^{3/2} (1 - 2 \eta p^{r+1}) \nonumber \\
&\geq& \gamma - \frac{1}{10} \gamma^{3/2} + \frac{2}{5} \gamma^{3/2} = \gamma + \frac{3}{10} \gamma^{3/2} > \gamma + \eta. 
\end{eqnarray}
This contradicts the assumption on $H$ since $F$ is a subspace of codimension at most $r+1$ of $H$. Therefore, $2A_x - 2A_x = \overrightarrow{B_x}$ and
\[
\phi_V(P) \supset \cup_{x \in V^{(i)}} \{x\}\times \overrightarrow{B_x} 
= \{(x,y) \in V^{(i)} \times H \mid x\cdot \overrightarrow{\xi_1}(y)=\cdots=
x\cdot \overrightarrow{\xi}_{s_i}(y)=0 \}.
\]
Since the codimension of $H$ in $W^{(i)}$ is $O(r  \eta^{-1} \log\gamma^{-1})$, Theorem \ref{mainTrm} follows in this case. In Case 2, a similar argument shows that $\phi_H \phi_V (P)$ contains the desired bilinear structure.
\end{proof}

\section{Proof of Proposition \ref{codimensionReduction}} \label{sec:proof}

First we suppose that there exists a nonzero $\boldsymbol{\lambda} = (\lambda_1, \ldots, \lambda_s) \in \F_p^s$
such that $\xi'_0=\sum_{j=1}^s \lambda_j\xi_j$
satisfies $\rk \overrightarrow{\xi'_0}<3r+10$.
By completing $\lambda$ into a basis of
$\F_p^s$, we obtain affine maps $\xi'_0,\ldots,\xi'_{s-1}$
from $W$ to $V^*$ such that
$\Span (\xi'_0(y),\ldots,\xi'_{s-1}(y))=\Span (\xi_1(y),\ldots,\xi_s(y))$ for any $y\in A$.
Let $V'=\Span(\xi'_0(y)\mid y\in A)^\perp$, then the codimension of $V'$ in $V$ is $\leq 3r +10$.
For each $y \in A$, let $V'_y < V'$ be given by $(V'_y)^\perp=\Span(\xi'_1(y),\ldots,\xi'_{s-1}(y))+U'_y$
where $U'_y$ is the projection of $U_y$ onto $V'^*$. Then $\dim U'_y\leq r-s$ and $\codim_{V'} V'_y \leq r-1$. Since $V'_y < V_y$, we have $P\supset \bigcup_{y\in A}V'_y\times \{y\}$. This proves the second alternative (with $S=A$).

So let us now suppose that there exists no nonzero $\lambda \in \F_p^s$ such that $\xi'_0=\sum_{j\in [s]}\lambda_j\xi_j$
satisfies $\rk \overrightarrow{\xi'_0}<3r+10$. For $x \in V$, let $A_x=\{y\in A\mid x\in V_y\} \subset W$. Thus 
\begin{equation} \label{eq:ax}
\sum_{x \in V} |A_x| = |P| = |A| |V| p^{-r} = \alpha p^{-r} |W||V|.  
\end{equation}
Also, let $B_x=\{y\in W\mid x\cdot \xi_1(y)=\cdots =x\cdot \xi_s(y)=0\}$; it is an affine subspace of codimension at most $s$, and $A_x\subset A\cap B_x$. 

\noindent \textbf{Claim 1:} $\P_{x\in V}(\codim B_x<s)\leq \epsilon p^{-r}/4$. (Recall that $\epsilon = p^{-r}/256$.)

Note that 
$$\overrightarrow{B_x}=\{y\in \overrightarrow{W}\mid
y\cdot \overrightarrow{\xi_1}^T(x)=\cdots=y\cdot \overrightarrow{\xi_s}^T(x)=0
\}$$
is a subspace of codimension $s$ unless there exists
a nonzero $\lambda\in \F_p^s$ such that
$\sum_{j\in [s]}\lambda_j\overrightarrow{\xi_j}^T(x)=0$.
For any fixed such $\lambda$, the set of $x$
that satisfy this relation
is a linear subspace, namely the kernel $K_\lambda$ of
$\sum_{j\in [s]}\lambda_j\overrightarrow{\xi_j}^T$
whose codimension equals the rank of
$\sum_{j\in [s]}\lambda_j\overrightarrow{\xi_j}$,
hence at least $3r+10$.
Hence $\abs{K_\lambda}/\abs{V}\leq p^{-3r-10}$.
Because there are at most $p^r$ tuples $\lambda$ to consider,
we conclude that $\P_{x\in V}(\codim B_x<s)\leq p^{r} \cdot p^{-3r-10} \leq \epsilon p^{-r}/4$, and Claim 1 is proved.


\noindent \textbf{Claim 2:} Let $\alpha_x$ be the density of $A_x$ in $B_x$, then $\E_{x\in V}\alpha_x \geq \alpha p^{s-r}(1-\epsilon/4)$.

Indeed, let $X=\{x\in V\mid\codim B_x=s\}$, then we have
\begin{align*}
\E_{x\in V}\alpha_x &=
\frac{1}{\abs{V}}\sum_{x\in V}\frac{\abs{A_x}}{\abs{B_x}} \geq \frac{1}{\abs{V}}\sum_{x\in X}\frac{\abs{A_x}}{\abs{B_x}}\\
&= p^s\E_{x\in V}\frac{\abs{A_x}}{\abs{W}}-p^s\frac{1}{\abs{V}}\sum_{x\in X^c}\frac{\abs{A_x}}{\abs{W}}\\
&\geq 
\alpha p^{s-r}-\alpha p^{s}\frac{\abs{X^c}}{\abs{V}} \geq \alpha p^{s-r}(1-\epsilon/4)
\end{align*}
where we have used \eqref{eq:ax} and the trivial bound $|A_x| \leq |A| = \alpha |W|$. 

Proposition \ref{codimensionReduction} will follow from Lemmas \ref{randomnessLinearity} and \ref{linearity}.

\begin{lm}
\label{randomnessLinearity}
At least one of the following statements holds.
\begin{enumerate}
\item For at least $p^{-r}\abs{V}/12$ elements $x\in V$,
we have $2A_x-2A_x=\overrightarrow{B_x}$ (the direction of $B_x$).
\item Among additive quadruples $y_1+y_2=y_3+y_4$ in $A$, a proportion at least $p^{-4r}\epsilon$ has the property that $\codim\bigcap_{i=1}^4V_{y_i}<4r-s$.

\end{enumerate}
\end{lm}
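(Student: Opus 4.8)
The plan is to prove the dichotomy directly: assume the first alternative fails, so that $2A_x - 2A_x \ne \overrightarrow{B_x}$ for all but fewer than $p^{-r}|V|/12$ values of $x \in V$, and deduce the second.

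First I would isolate a large, well-behaved family of $x$. Combining Claim 1 (the set $X^c$ of $x$ with $\codim B_x < s$ has density at most $\epsilon p^{-r}/4$), Claim 2 ($\E_{x\in V}\alpha_x \gg \alpha p^{s-r}$) and the trivial bound $\alpha_x \le \alpha p^s$, an averaging argument produces a set of density $\gg p^{-r}$ inside $V$ consisting of $x$ with $\codim B_x = s$ and $\alpha_x \gg \alpha p^{s-r}$. Discarding the exceptional set coming from the failure of the first alternative still leaves a set $\mathcal B \subset V$ of density $\gg p^{-r}$ on which, in addition, $2A_x - 2A_x \ne \overrightarrow{B_x}$. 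For $x \in \mathcal B$, Lemma \ref{lem:wholespace} applied in contrapositive form gives $\sum_{\chi \ne 1}|\widehat{1_{A_x}}(\chi)|^4 \ge \alpha_x^4$; equivalently, writing $E_x := \nor{1_{A_x}}_{U^2(B_x)}^4\,|B_x|^3$ for the number of additive quadruples of $A_x$, we obtain the energy excess $E_x - \alpha_x^4|B_x|^3 \ge \alpha_x^4|B_x|^3$. Since $E_x - \alpha_x^4|B_x|^3 \ge 0$ for every $x$, summing over $\mathcal B$ and using the power-mean inequality on $(\alpha_x)_{x\in\mathcal B}$ (whose sum is $\gg \alpha p^{s-r}|V|$) yields
\[
\sum_{x \in V}\bigl(E_x - \alpha_x^4|B_x|^3\bigr) \;\gg\; \alpha^4 p^{s-4r}\,|V|\,|W|^3 .
\]

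The heart of the argument is to bound this same quantity from above by the number of non-generic additive quadruples of $A$. Interchanging summations, $\sum_x E_x = \sum_{(y_i)}\bigl|\bigcap_{i=1}^4 V_{y_i}\bigr|$, the sum over additive quadruples $y_1+y_2=y_3+y_4$ in $A$, while $\sum_x \alpha_x^4|B_x|^3 = \sum_x |A_x|^4/|B_x|$ agrees, up to an error supported on $X^c$, with $\tfrac{p^s}{|W|}\sum_{(y_i)\in A^4}\bigl|\bigcap_{i=1}^4 V_{y_i}\bigr|$. The key algebraic observation is that for an additive quadruple the affineness of $\xi_1,\dots,\xi_s$ forces $\xi_j(y_4) = \xi_j(y_1) + \xi_j(y_2) - \xi_j(y_3)$, so $\dim\Span(\xi_j(y_i))_{i\in[4],\,j\in[s]} \le 3s$ and hence $\codim\bigcap_i V_{y_i} \le 3s + 4(r-s) = 4r - s$, with equality for a generic additive quadruple, whereas a generic unrestricted quadruple has $\codim\bigcap_i V_{y_i} = 4r$ (we may assume $\dim V \ge 4r$, which is harmless since $r$ is bounded). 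One checks that the two "generic" contributions to the two sums are equal, hence cancel; what survives is then controlled by (a) the pseudorandomness defect $\nor{1_A}_{U^2(W)}^4 - \alpha^4 \le \epsilon\alpha^4$, contributing $O(\epsilon\alpha^4 p^{s-4r}|V||W|^3)$, negligible by the choice $\epsilon = p^{-r}/256$; (b) the error from $X^c$, handled via the fact (from the proof of Claim 1) that $X^c$ lies in fewer than $p^r$ subspaces of codimension $\ge 3r+10$, so its intersections with the $\bigcap_i V_{y_i}$ are extremely thin; and (c) the quantity $M := |\{(y_i)\ \text{additive quadruple in}\ A : \codim\bigcap_i V_{y_i} < 4r-s\}|$, contributing $O(M|V|)$. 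Comparing with the lower bound forces $M \gg p^{-O(r)}\alpha^4|W|^3$, and an honest tracking of the constants upgrades this to $M \ge p^{-4r}\epsilon\cdot|\{(y_i)\ \text{additive quadruple in}\ A\}|$, which is exactly the second alternative.

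The step I expect to be hardest is this final accounting — verifying that the generic contributions of the two sums really cancel and that both the pseudorandomness defect and the $X^c$ error are dominated by the $\gg \alpha^4 p^{s-4r}|V||W|^3$ lower bound; the degenerate additive quadruples must also be handled separately, since they inflate $M$ while contributing only $O(\alpha^2|W|^2|V|)$ to $\sum_x E_x$, hence nothing once $|W|$ is large. The numerology in the hypotheses ($\epsilon = p^{-r}/256$, and the rank threshold $3r+10$ confining $X^c$ to very high-codimension subspaces) is precisely what is needed for this estimate to close.
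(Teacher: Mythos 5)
Your overall architecture is sound and is essentially the paper's argument run in the contrapositive: the same double counting $\sum_x E_x=\sum_{\mathbf{y}\in Q}\abs{\bigcap_i V_{y_i}}$, the same use of Lemma \ref{lem:wholespace}, the algebraic observation that additive quadruples force $\codim\bigcap_i V_{y_i}\le 4r-s$, and the $\epsilon$-pseudorandomness of $A$ to control $\abs{Q}$. The lower bound on the total energy excess over your set $\mathcal{B}$ is also fine (discarding the at most $p^{-r}\abs{V}/12$ good $x$ from $X_2$ still leaves density $\gg p^{-r}$, and power-mean then gives $\gg\alpha^4 p^{s-4r}\abs{V}\abs{W}^3$).

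There is, however, a genuine gap in your error term (b). To close the comparison you need a lower bound on $\sum_{x\in X}\abs{A_x}^4/\abs{B_x}=\frac{p^s}{\abs{W}}\sum_{x\in X}\abs{A_x}^4$ of the form $(1-o(1))\,\alpha^4 p^{s-4r}\abs{W}^3\abs{V}$, and you propose to get it by subtracting $\sum_{x\in X^c}\abs{A_x}^4=\sum_{(y_i)\in A^4}\abs{X^c\cap\bigcap_i V_{y_i}}$ from the all-quadruples identity, controlling the subtracted term via the thinness of $X^c$. This does not work: Claim 1 only gives $\abs{X^c}\le p^r\cdot p^{-3r-10}\abs{V}=p^{-2r-10}\abs{V}$, and since one of the subspaces $K_\lambda$ may well contain $\bigcap_i V_{y_i}$ (there is no general-position hypothesis), the best available bound is $\sum_{x\in X^c}\abs{A_x}^4\le p^{-2r-10}\abs{V}\,\alpha^4\abs{W}^4$, which exceeds the main term $\alpha^4 p^{-4r}\abs{W}^4\abs{V}$ by a factor $p^{2r-10}$ as soon as $r>5$. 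The rank threshold $3r+10$ is calibrated to beat $p^{-r}$ losses at the level of \emph{first} moments of $\abs{A_x}$ (as in Claim 2), not fourth moments. The fix is the paper's: never lower-bound $\sum_x\abs{A_x}^4$ by subtracting an $X^c$ contribution; instead restrict to $X$ at the first-moment level, where Claim 2 gives $\sum_{x\in X}\abs{A_x}\ge(1-\epsilon/4)\alpha p^{-r}\abs{W}\abs{V}$, and then apply Jensen/power-mean to get $\sum_{x\in X}\abs{A_x}^4\ge(\sum_{x\in X}\abs{A_x})^4/\abs{V}^3$. With that substitution your numerology closes (and in fact yields a better proportion than the stated $p^{-4r}\epsilon$). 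Your worry about degenerate quadruples is a non-issue here; they are simply counted among $Q$ and need no separate treatment in this lemma.
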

\begin{proof}
Let $Q$ be the set of additive quadruples $\mathbf{y}=(y_1,\ldots,y_4)$ of $A$. Let $m=\dim W$.
We have
\begin{align*}
\E_{x\in V}\nor{1_{A_x}}_{U^2(B_x)}^4 
&=\E_{x\in V}
\E_{\substack{y_1,\ldots,y_4\in B_x\\y_1+y_2=y_3+y_4}}\prod_{i=1}^4 1_{y_i\in A_x}
\\
&\leq\frac{1}{p^{3(m-s)}}\sum_{\substack{(y_1,\ldots,y_4)\in A^4 \\
y_1+y_2=y_3+y_4}}\E_{x\in V}1_{\forall i\, x\in V_{y_i}} \qquad \textrm{(since $\dim B_x \geq m-s$)}\\
&=\frac{1}{p^{3(m-s)}}\sum_{\mathbf{y}\in Q}
p^{-\codim \bigcap_i V_{y_i}}\\
&\leq \alpha^4(1+\epsilon)\big(\E_{\mathbf{y}\in Q}(p^{3s-\codim \bigcap_i V_{y_i}})\big) \qquad \textrm{(recall that $A$ is $\epsilon$-uniform)}\\
&\leq\alpha^4p^{-4(r-s)}(1+\epsilon)(1+p^{4r-s}\P_{\mathbf{y}\in Q}(\codim \bigcap V_{y_i}<4r-s).
\end{align*}

So either
\begin{equation}
\label{additiveStruc}
\P_{\mathbf{y}\in Q}\left(\codim\bigcap_{i=1}^4V_{y_i}<4r-s\right)\geq p^{-4r+s}\epsilon
\end{equation}
or
\begin{equation}\label{psr}
\E_{x\in V}\nor{1_{A_x}}_{U^2(B_x)}^4\leq \alpha^4p^{-4(r-s)}(1+\epsilon)^2.
\end{equation}
Equation \eqref{additiveStruc}
is exactly the second clause of Lemma \ref{randomnessLinearity},
so assume instead that
\eqref{psr} holds.
We infer that
\begin{equation*}
\begin{split}
\E_{x\in V}\nor{1_{A_x}-\alpha_x}_{U^2}^4
&=
\E_{x\in V}(\nor{1_{A_x}}_{U^2}^4-\alpha_x^4)
\\
&\leq
\E_{x\in V}\nor{1_{A_x}}_{U^2}^4-\alpha^4p^{-4(r-s)}(1-\epsilon/4)^4\\
&\leq \alpha^4p^{-4(r-s)}(2\epsilon  +\epsilon^2+\epsilon)\\
&\leq 4\epsilon\alpha^4p^{-4(r-s)}=:\gamma
\end{split}
\end{equation*}
where we used Jensen's inequality, the lower bound
$\E_{x\in V}\alpha_x\geq p^{-(r-s)}(1-\epsilon p^{-r}/4)$
and the elementary inequality 
$(1-\epsilon/4)^4\geq 1-\epsilon$.
Thus, if $X_1:=\{x\in V\mid \nor{1_{A_x}-\alpha_x}_{U^2(B_x)}^4\leq 4p^r\gamma \}$,
by Markov's inequality,
we have $\abs{X_1}\geq \abs{V}(1-p^{-r}/4)$.
Also, because $\alpha_x\leq \alpha p^s$ for any $x\in V$,
the set $X_2:=\{x\in V\mid\alpha_x > \alpha p^{-(r-s)}/2\}$
has density at least $p^{-r}(1/2-\epsilon/4)\geq p^{-r}/3$. So $X_3:=X_1\cap X_2$
must have
density at least $p^{-r}/12$ by inclusion-exclusion.

Besides, if $\epsilon=1/(256p^r)$, then for $x\in X_3$ we have
$\nor{1_{A_x}-\alpha_x}_{U^2}^4 \leq 4 p^r \gamma < \alpha_x^4$ and then $2A_x-2A_x=\overrightarrow{B_x}$ by Lemma \ref{lem:wholespace}.
\end{proof}

We now prove Proposition \ref{codimensionReduction}.
When the first outcome of Lemma \ref{randomnessLinearity}
holds, we see that
$\phi_V(P)$
contains $\{(x,y)\in X_3\times W_2\mid x\cdot \overrightarrow{\xi_1}(y)=\cdots=
x\cdot \overrightarrow{\xi_s}(y)=0\}$ where $W_2$ is the direction of $W$.

The real challenge lies
in extracting something from the second outcome of Lemma \ref{randomnessLinearity}.
This is the purpose of the next lemma.
\begin{lm}
\label{linearity}
Suppose $r>s$ and a proportion
at least $\kappa$ of the additive quadruples $(y_1,\ldots,y_4)$ of $A$ 
have the property that $\codim \bigcap_{i=1}^4V_{y_i}<4r-s$.
Then there is a subset $S\subset A$ of density $\sigma=\sigma(r,\alpha,\kappa)$ such that one of the following holds.
\begin{enumerate}
\item There is a subspace $V'\leq V$ of codimension one such that $V_y \subset V'$ for all $y \in S$. Besides, there exist affine maps $\xi'_1,\ldots,\xi'_{s-1}$
from $W$ to $V'^*$ and spaces $U'_y\leq V'^*$ of dimension
at most $r-s$
such that $(V_y)^{\perp}=\Span(\xi'_1(y),\ldots,\xi'_{s-1}(y))+U'_y$ (where $(V_y)^{\perp}$ is defined inside $V'^*$).
\item
There is an affine map $\xi_{s+1} : W\rightarrow V^*$ and
a subspace $U'_y\leq V^*$ of dimension at most $r-s-1$ such
that
$V_y^\perp=\Span(\xi_j(y)\mid j\in [s+1])+U'_y$ for all $y \in S$.
\end{enumerate} 
Moreover $\sigma$ can be taken to be quasipolynomial\footnote{Polynomial under the polynomial Freiman-Ruzsa conjecture.} in $\alpha 
\kappa p^{-r}$.
\end{lm}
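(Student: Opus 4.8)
The plan is to convert the rank-deficiency hypothesis into the statement that a suitable auxiliary map, defined on a dense subset of $A$, has $\gg\abs{A}^3$ additive quadruples in its graph, and then to linearise that map by means of Lemma \ref{BSGFR}; the affine map it produces will be the $\xi_{s+1}$ of the second alternative. A residual, more rigid case --- in which the relevant linear dependence involves only the forms $\xi_j$ --- will instead pin down the codimension-one subspace $V'$ of the first alternative.

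First I would unwind the hypothesis into linear algebra. For an additive quadruple $\mathbf y=(y_1,y_2,y_3,y_4)$ of $A$ one has $\big(\bigcap_i V_{y_i}\big)^\perp=\sum_i V_{y_i}^\perp$, and since each $\xi_j$ is affine, $\xi_j(y_1)+\xi_j(y_2)=\xi_j(y_3)+\xi_j(y_4)$, so $\sum_i V_{y_i}^\perp$ is spanned by the $3s$ vectors $\{\xi_j(y_1),\xi_j(y_2),\xi_j(y_3)\}_{j\in[s]}$ together with a spanning set of $\sum_i U_{y_i}$; hence $\codim\bigcap_i V_{y_i}\le 3s+4(r-s)=4r-s$, with equality for ``generic'' quadruples. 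Thus the hypothesis says precisely that for a proportion $\ge\kappa$ of additive quadruples of $A$ there is a linear dependence among the $\le 4r$ vectors $\{\xi_j(y_i)\}_{i\in[4],\,j\in[s]}\cup\{\text{a fixed basis of }U_{y_i}\}_{i\in[4]}$ beyond the $s$ obvious ones. After fixing, for each $y\in A$, an ordered basis of $U_y$ (padded to dimension exactly $r-s$), such a dependence is a nonzero vector in a fixed space of dimension $\le 4r$; pigeonholing over the $\le p^{4r}$ patterns, applying Lemma \ref{lem:partition} to the resulting set of quadruples (the negligibly many quadruples with a repeated entry being discarded), and pigeonholing once more, I may assume at the cost of a factor $\gg p^{-4r}$ that a single pattern holds and that the four entries range over disjoint sets $A=A_1\sqcup A_2\sqcup A_3\sqcup A_4$, so that $y_4=y_1+y_2-y_3$ with the variables in general position. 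Writing the dependence as $w_1+w_2+w_3+w_4=0$ with $w_i=\sum_j a_{ij}\xi_j(y_i)+v_i\in V_{y_i}^\perp$, where $v_i\in U_{y_i}$ is the distinguished combination at slot $i$, and substituting $y_4=y_1+y_2-y_3$, I can use affinity of the $\xi_j$ to solve for $v_4$ as an explicit affine-linear function of $(y_1,y_2,y_3)$ plus $-(v_1+v_2+v_3)$.

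Now I would split into cases. In the main case some $v_i$ is nonzero --- say $v_4$, after relabelling --- and the formula for $v_4$ reduces, after pigeonholing out the bounded ambiguity coming from $v_1,v_2,v_3$ and from the linear parts of the $\xi_j$, to a genuine function $\eta$ of $y_4$ alone, with $\eta(y_4)\in U_{y_4}\setminus\{0\}$ for a dense set of $y_4\in A_4$. The quadruples under consideration then become additive quadruples in the graph of $\eta$, and since there are $\gg\kappa p^{-O(r)}$ of them among the $\asymp\alpha^4\abs{W}^3$ additive quadruples of $A$, the graph of $\eta$ carries $\gg\kappa\alpha p^{-O(r)}\abs{A}^3$ additive quadruples. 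Lemma \ref{BSGFR} then produces $S\subset A$, of density quasipolynomial in $\alpha\kappa p^{-r}$, on which $\eta$ agrees with an affine map $\xi_{s+1}:W\to V^*$; taking $U'_y\subset U_y$ to be a complement of $\Span(\xi_{s+1}(y))$ (so $\dim U'_y\le r-s-1$) gives $V_y^\perp=\Span(\xi_1(y),\dots,\xi_{s+1}(y))+U'_y$ for $y\in S$, which is the second alternative. In the remaining case all the $v_i$ vanish, so the dependence is purely among the $\xi_j(y_i)$ and, modulo the obvious relations, reads $\zeta_1(y_1)+\zeta_2(y_2)+\zeta_3(y_3)=0$ for affine combinations $\zeta_i=\sum_j a_{ij}\xi_j$ with some $\zeta_i$ nonzero (hence of rank $\ge 3r+10$ by the standing hypothesis), holding for a $\gg\kappa p^{-O(r)}$ proportion of the triples. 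Fixing a popular pair of the remaining two variables forces the nonzero $\zeta_i$, say $\zeta_3$, to be equal to a fixed constant $c$ on a dense subset $S$ of $A_3$; a short argument (using that no nonzero combination of the $\xi_j$ is constant, again by the standing hypothesis) shows $c\ne 0$, and since $c=\sum_j a_{3j}\xi_j(y)\in\Span(\xi_j(y))_{j\in[s]}\subset V_y^\perp$ for $y\in S$, the codimension-one space $V'=c^\perp$ contains every $V_y$ with $y\in S$; projecting the $\xi_j$ and the $U_y$ to $V'^*$ then renders $\xi_s$ redundant on $S$ and produces the first alternative.

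The hard part is exactly this case analysis. In the main case one must show that after bounded pigeonholing the formula for $v_4$ genuinely collapses to a function of a single variable whose graph inherits enough additive structure --- this is precisely what the partition into $A_1,\dots,A_4$ is for, and it is here that one must be careful that the extracted $\eta$ is nonzero and well defined for a positive density of $y$. In the residual case one must rule out the various spurious configurations (a dependence supported partly on obvious relations, a constant turning out to be zero, and so on), and this is where the strength of the $\epsilon$-pseudorandomness hypothesis with $\epsilon=p^{-r}/256$ is used. Everything else is routine: all losses other than the Freiman--Ruzsa input to Lemma \ref{BSGFR} are of the form $p^{-O(r)}$, so the final density $\sigma$ is quasipolynomial in $\alpha\kappa p^{-r}$, and the dimension count for $U'_y$ is immediate from $\dim U_y\le r-s$.
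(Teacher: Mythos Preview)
Your high-level plan---pigeonhole to a fixed dependence pattern among the $4r$ vectors $\xi_j(y_i)$, then either linearise via Lemma~\ref{BSGFR} or extract a constant form---is exactly the paper's strategy. But the boundary between your two cases is drawn in the wrong place, and this creates a genuine gap.

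The paper's dichotomy is: \emph{Case~1}, some slot $i$ has $a_{i,j}=0$ for every $j>s$; \emph{Case~2}, every slot has some $a_{i,j}\ne0$ with $j>s$. Your split is the reverse extreme: residual case = all $v_i$ vanish; main case = some $v_i\ne0$. The problem is the intermediate situation where, say, slots $1,2,3$ have zero $U$-component but slot $4$ does not. In the paper this falls under Case~1 (one eliminates $y_1$, say, using affinity of the $\xi_j$ and runs the $r(\phi)$ counting argument to find a popular nonzero $\phi\in V^*$). In your scheme it falls under the main case, and there your argument breaks: to apply Lemma~\ref{BSGFR} you must define a single map $\Xi$ on all of $A=A_1\cup\cdots\cup A_4$ whose graph carries the additive quadruples, and the natural choice $\Xi(y)=\pm\sum_j a_{i,j}\xi_j(y)$ for $y\in A_i$ lands in $\Span(\xi_j(y):j\in[s])$ whenever $y\in A_1,A_2,A_3$. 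So for any $y\in S\cap(A_1\cup A_2\cup A_3)$ the linearised $\xi_{s+1}(y)$ contributes no new direction and you cannot force $\dim U'_y\le r-s-1$. Your phrase ``pigeonholing out the bounded ambiguity coming from $v_1,v_2,v_3$'' does not repair this: the $v_i$ are not bounded objects one can pigeonhole over, and restricting attention to $y_4\in A_4$ alone leaves you with no additive quadruples inside the graph of $\eta$ (the other three coordinates lie in $A_1,A_2,A_3$). The fix is precisely the paper's: require \emph{all four} slots to have a nonzero $j>s$ coefficient before invoking Lemma~\ref{BSGFR}; otherwise, eliminate a slot with zero $U$-component and argue as in Case~1.

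Two smaller points. In your residual case, the claim that $c\ne0$ does not follow from the high-rank standing hypothesis as stated: that hypothesis only says $\{y:\zeta_3(y)=c\}$ has codimension $\ge3r+10$, which can still contain a set of density $\kappa\alpha p^{-O(r)}$ in $W$. The paper sidesteps this by a direct counting bound $\abs{T}\le 2\max_{\phi\ne0}r(\phi)\bigl(\sum_\phi r(\phi)\bigr)^2$ with $r(\phi)=\abs{\{y\in A:\phi\in V_y^\perp\}}$, which produces the popular nonzero $\phi$ without fixing variables. Finally, the $\epsilon$-pseudorandomness of $A$ is not used anywhere in this lemma; it enters only in Lemma~\ref{randomnessLinearity}.
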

Applying Lemma \ref{linearity} with $\kappa = p^{-4r} \epsilon = p^{-5r}/256$, the first alternative implies the second statement of Proposition \ref{codimensionReduction} since $\codim_{V'} V_y \leq r-1$, while the second alternative yields the third one of Proposition \ref{codimensionReduction}. Our goal is now to prove Lemma \ref{linearity}.



\begin{proof}[Proof of Lemma \ref{linearity}]
Let $\xi_{s+1},\ldots,\xi_r$ be (not necessarily linear) maps from $A$ to
$V^*$ such that $U_y=\Span(\xi_{s+1}(y),\ldots,\xi_r(y))$
for any $y\in A$.
The number of additive quadruples in $A$ is at least $\alpha^4\abs{W}^3=\alpha\abs{A}^3$, and we assume at least $\kappa \alpha\abs{A}^3$ of them have the property
that the $4r$ vectors $\xi_j(y_i)$ satisfy at least
$s+1$ linearly
independent equations.
For any additive quadruple in $A$, we already have
$s$ obvious equations 
\begin{equation}
\label{obviousEqs}
\xi_j(y_1)+\xi_j(y_2)=\xi_j(y_3)+\xi_j(y_4)\quad\text{ for }
j\in [s],
\end{equation}
so there needs to be one more (independent) equation.
Because there are only $p^{4r}$ possible linear equations
\begin{equation}\label{liaison} \sum_{j=1}^r\sum_{i=1}^4a_{i,j}\xi_j(y_i)=0,
\end{equation} 
the pigeonhole principle implies that we can find
$(a_{i,j})\in \F^{4r}\setminus \{0\}$ (linearly independent from the vectors $b_{i,j}=1_{j=j_0}$
for $j_0\in [s]$)
such that there are at least
$\kappa\alpha\abs{A}^3/p^{4r}$ quadruples $(y_1,\ldots,y_4)\in W^4$
for which $y_1+y_2=y_3+y_4$
and
equation \eqref{liaison} holds.
Let $T$ be that set of quadruples.
Write $\mathbf{a}_i=(a_{i,j})_{j=1,\ldots,r}$. We distinguish two cases.

\noindent \textbf{Case 1:} One of the four families $\mathbf{a}_1,\ldots,\mathbf{a}_4$,  say $\mathbf{a}_4$, satisfies $a_{4,j}=0$
for any $j>s$. Then we can use the equations \eqref{obviousEqs}
to eliminate $y_4$ in equation \eqref{liaison}.
We obtain $\phi_1+\phi_2+\phi_3=0$ for some vectors $\phi_i\in V_{y_i}^\perp$ for $i\in [3]$, not all equal to 0.
Write $r(\phi)=\abs{\{y\in A\mid \phi\in V_y^\perp\}}$ for any $\phi\in V^*$.
Then we have
$$
p^{-4r}\kappa \alpha\abs{A}^3
\leq \abs{T}\leq
\sum_{\substack{\phi_1,\phi_2\in V^*,\\ (\phi_1,\phi_2) \neq (0,0)}}r(\phi_1)r(\phi_2)r(-\phi_1-\phi_2)\leq 2 \max_{\phi\in V^* \setminus \{0\}} r(\phi)\left(\sum_{\phi\in V}r(\phi)\right)^2.
$$
A double counting argument shows that
$\sum_{\phi\in V^*}r(\phi)\leq p^r\abs{A}$.
So $\max_{\phi\in V^* \setminus \{0\}} r(\phi)\geq \frac{1}{2} \kappa\alpha p^{-6r}\abs{A}$, which implies that there exists a linear form $\phi\in V^* \setminus \{0\}$ such that for a positive
proportion of $y\in A$, we have $\phi\in V_y^\perp$. Name $S_0 \subset A$ this set of elements $y\in A$, then $|S_0| \geq \frac{1}{2} \kappa\alpha p^{-6r}\abs{A}$. Let $V'=\phi^\perp$, then $\codim_{V}V' =1$ and $V_y \leq V'$ for any $y \in S_0$. Let $S_1 := \{y \in A: \phi \in \Span(\xi_{1}(y),\ldots,\xi_s(y)) \}$ and $S_2 := S_0 \setminus S_1$.
So 
$\max (\abs{S_0},\abs{S_1})\geq \frac{1}{4} \kappa\alpha p^{-6r}\abs{A}$.
We distinguish two subcases.

\noindent \textit{Case 1a:} $|S_1| \geq \frac{1}{4} \kappa\alpha p^{-6r}\abs{A}$. Since there are at most $p^r$ possible ways to write $\phi$ as a linear combination of $\xi_{1}(y),\ldots,\xi_s(y)$, there is a nonzero $\boldsymbol{\lambda} = (\lambda_1, \ldots, \lambda_s) \in \F_p^s$ and a subset $S_3 \subset S_1$ of size $\gg \kappa \alpha p^{-7r}$ such that $\phi = \sum_{i=1}^s \lambda_i \xi_i(y)$ for any $y \in S_3$. 
By completing $\boldsymbol{\lambda}$ into a basis of $\F_p^s$, we can find affine maps 
$\xi'_1, \ldots, \xi'_{s-1} : W \rightarrow V^*$ such that $\Span(\xi_{1}(y),\ldots,\xi_s(y)) = \Span(\phi, \xi'_{1}(y),\ldots,\xi'_{s-1}(y))$ for any $y \in S_3$. 
Considering now $V_y$ as a subspace of $V'$, and thus defining its orthogonal as a subspace of $V'^*$,
we have
\[
V_y^\perp = \Span(\xi'_{1}(y),\ldots,\xi'_{s-1}(y)) + U'_y
\]
for any $y \in S_3$, where $U'_y$ is the projection of $U_y$ on $V'^*$. Thus the first alternative of Lemma \ref{linearity} follows with $S=S_3$.

\noindent \textit{Case 1b:} $|S_2| \geq \frac{1}{4} \kappa\alpha p^{-6r}\abs{A}$. For $y \in S_2$, let $U''_y \leq V_y^\perp$ be such that 
$$V_y^\perp = \Span(\xi_{1}(y),\ldots,\xi_{s}(y), \phi) \oplus U''_y,$$
 then $\dim U''_y \leq r-s-1$. Projecting to $V'$, we have
\[
V_y^\perp = \Span(\xi_{1}(y),\ldots,\xi_{s}(y)) + U''_y
\]
for any $y \in S_2$ and letting $U'_y = U''_y+\Span(\xi_s(y))$, the first alternative of Lemma \ref{linearity} follows with $S=S_2$.


\noindent \textbf{Case 2:} None of the four families satisfies
$a_{i,j}=0$ for any $j>s$. We shall aim at linearity instead of constancy.
Removing quadruples for which two entries are equal (there are
$O(\abs{A}^2)$ such quadruples), we still have a set $T'$ of quadruples satisfying $\abs{T'}\geq C\abs{A}^3$ for some constant
$C\gg \kappa\alpha p^{-4r}$. Applying Lemma \ref{lem:partition}, we can pick a partition
$A=A_1\cup A_2\cup A_3\cup A_4$ such that the set
$$T''=T'\cap A_1\times A_2\times A_3\times A_4$$
satisfies
$\abs{T''}\geq C\abs{A}^{3}/256$.
For $i\in[4]$ and $y\in A_i$, set $\xi'_{s+1}(y)=
z_i\sum_{j\in[r]} a_{i,j}\xi_j(y)$ 
where $z_1=z_2=1$ and $z_3=z_4=-1$.
Observe that 
$\xi'_{s+1}(y)$ is
a nonzero vector in $V_y^\perp$.
For $i\in [4]$, let $j_i>s$ be any index such that
$a_{i,j_i}\neq 0$. For $y\in A_i$, let
$U'_y=\Span(\xi_{s+1}(y),\cdots,\widehat{\xi_{j_i}(y)},\cdots,
\xi_r(y))$,
where the hat denotes an omitted form; this is a space of dimension at most $r-s-1$.
We have $V_y^\perp=\Span(\xi_{1}(y),\ldots,\xi_s(y),\xi'_{s+1}(y))+U'_y$.
Further, for a quadruple $\mathbf{y}\in T''$, we observe that
$(y_i,\xi_{s+1}'(y_i))_{i\in[4]}$ is an additive quadruple. So there are at least $C\abs{A}^{3}/256$ additive quadruples in the graph  
$\{(y,\xi'_{s+1}(y))\mid y\in A\}$. 
We then invoke Lemma 9 to obtain a set $S\subset A$
of quasipolynomial (in $C$) density in $A$,
such that $\xi'_{s+1}$
coincides with an affine map on $S$.
This concludes the proof.
\end{proof}

\subsection*{Acknowledgements}
The authors are thankful to Ben Green and Terence Tao 
for suggesting the bilinear Bogolyubov theorem
and sharing ideas on how it could be proven. The first author
is also grateful to his supervisor Julia Wolf and Tom Sanders
for useful conversations. 
Part of this work was carried out while the first author was staying at the Simons Institute for the theory of computing and supported by a travel grant of the University of Bristol Alumni Foundation.
The second author was supported by National Science Foundation Grant DMS-1702296 and a Ralph E. Powe Junior Faculty Enhancement Award from Oak Ridge Associated Universities.


\begin{thebibliography}{99}
\bibitem{bl}
P.-Y. Bienvenu and T.~H. Lê.
\newblock Linear and quadratic uniformity of the {M}\"obius function over
  $\mathbb{F}_q[t]$.
\newblock  	arXiv:1711.05349.

\bibitem{bogo}
N.~Bogolio\`uboff.
\newblock Sur quelques propri\'et\'es arithm\'etiques des presque-p\'eriodes.
\newblock {\em Ann. Chaire Phys. Math. Kiev}, 4:185--205, 1939.

\bibitem{GM}
W.T. Gowers and L. Mili{\'c}evi{\'c}
\newblock A bilinear version of Bogolyubov's theorem
\newblock arXiv:1712.00248

\bibitem{GreenSurvey}
B.~Green.
\newblock Finite field models in additive combinatorics.
\newblock In {\em Surveys in combinatorics 2005}, volume 327 of {\em London
  Math. Soc. Lecture Note Ser.}, pages 1--27. Cambridge Univ. Press, Cambridge,
  2005.

\bibitem{Green}
B.~Green.
\newblock Montr\'eal notes on quadratic {F}ourier analysis.
\newblock In {\em Additive combinatorics}, volume~43 of {\em CRM Proc. Lecture
  Notes}, pages 69--102. Amer. Math. Soc., Providence, RI, 2007.

\bibitem{GTinverse}
B.~Green and T.~Tao.
\newblock An inverse theorem for the {G}owers {$U^3(G)$} norm.
\newblock {\em Proc. Edinb. Math. Soc. (2)}, 51(1):73--153, 2008.

\bibitem{kriz}
I.~K\v r\'\i\v z.
\newblock Large independent sets in shift-invariant graphs: solution of
  {B}ergelson's problem.
\newblock {\em Graphs Combin.}, 3(2):145--158, 1987.

\bibitem{Sanders}
T.~Sanders.
\newblock On the {B}ogolyubov-{R}uzsa lemma.
\newblock {\em Anal. PDE}, 5(3):627--655, 2012.

\bibitem{ffsurvey}
J.~Wolf.
\newblock Finite field models in arithmetic combinatorics---ten years on.
\newblock {\em Finite Fields Appl.}, 32:233--274, 2015.
\end{thebibliography}
\end{document}